\documentclass[11pt,a4paper]{amsart}



\usepackage{latexsym}
\usepackage[colorlinks]{hyperref}
\usepackage{xcolor}
\usepackage{mathscinet}
\usepackage{amsthm}
\usepackage{amssymb}
\usepackage{amsfonts}
\usepackage{amsmath}
\title[Existence and Exponential mixing]
{Existence and Exponential mixing of infinite white $\alpha$-stable
Systems with unbounded interactions}
\author[L. Xu]{Lihu Xu}
\address{PO Box 513, EURANDOM, 5600 MB  Eindhoven. The Netherlands}
\email{xu@eurandom.tue.nl}
\author[B.Zegarlinski]{Bogus{\l}aw Zegarli\'nski}
 \address{CNRS, Toulouse (on leave of absence from
Mathematics Department, Imperial College London, SW7 2AZ, United
Kingdom)}
\email{zegarlinski@univ-toulouse.fr}
\date{}
\vfuzz2pt 
 \newtheorem{thm}{Theorem}[section]
 
 \newtheorem{lem}[thm]{Lemma}
 \newtheorem{prop}[thm]{Proposition}
 \newtheorem{assumption}[thm]{Assumption}
 \theoremstyle{definition}
 \newtheorem{defn}[thm]{Definition}
 \theoremstyle{remark}
 \newtheorem{rem}[thm]{Remark}
 \numberwithin{equation}{section}

 \newcommand{\R}{\mathbb{R}}

 \newcommand{\p}{\partial}
 \newcommand{\D}{\mathcal{D}}
 \newcommand{\e}{\varepsilon}
 \newcommand{\Z}{\mathbb{Z}}
 \newcommand{\E}{\mathbb{E}}
 \newcommand{\N}{\mathbb{N}}
 \newcommand{\mcl}{\mathcal}

 \newcommand{\est}{\mcl E(s,t)}
 \newcommand{\eot}{\mcl E(0,t)}
 \newcommand{\estn}{\mcl E^{(n)}(s,t)}


\begin{document}
\maketitle
%

\begin{abstract} \label{abstract}
We study an infinite white $\alpha$-stable systems with unbounded interactions, proving the existence
by Galerkin approximation and exponential mixing property by an $\alpha$-stable version of gradient bounds.
\ \\

\noindent {\it Key words and phrases:} Ergodicity,
White symmetric $\alpha$-stable processes, Lie bracket, Finite speed of propagation of information, Gradient bounds.

\noindent {\it 2000 Mathematics Subject Classification.} 37L55, 60H10, 60H15.
\end{abstract}
\section{Introduction}
The SPDEs driven by L$\acute{e}$vy noises were intensively
studied in the past several decades (\cite{PeZa07}, \cite{AMR09},\cite{PeZa06}, \cite{PrZa09-2}, \cite{BaCh06},
\cite{AWZ98}, \cite{Ok08}, \cite{MaRo09}, $\cdots$). The noises can be Wiener(\cite{DPZ92},\cite{DPZ96}) Poisson (\cite{AWZ98}), $\alpha$-stable types (\cite{PrZa09},\cite{XZ09}) and so on.
To our knowledge, many of these results in these articles are in the frame of Hilbert space,
and thus one usually needs to
assume that the L$\acute{e}$vy noises are square integrable. This assumption rules out
a family of important L$\acute{e}$vy noises -- $\alpha$-stable noises. On the other hand, the ergodicity of SPDEs has also been intensively studied recently (\cite{DPZ96},\cite{HM06}, \cite{RX09},  \cite{XZ09}, \cite{FuXi09}), most of
these known results are about the SPDEs driven by Wiener type noises. There exist few results on the ergodicity of the SPDEs driven by the jump noises (\cite{XZ09}, \cite{PeZa07}). \\

In this paper, we shall study an interacting spin system driven by \emph{white} symmetric
$\alpha$-stable noises ($1<\alpha \leq 2$). More precisely, our system is described by the following
infinite dimensional SDEs: for each $i \in \mathbb{Z}^d$,
\begin{equation} \label{e:IntSys}
\begin{cases}
dX_i(t)=[J_i(X_i(t))+I_i(X(t))]dt+dZ_i(t) \\
X_i(0)=x_i
\end{cases}
\end{equation}
where $X_i, x_i \in \R$, $\{Z_i; i \in \Z^d\}$ are a sequence of i.i.d.
symmetric $\alpha$-stable
processes with $1<\alpha \leq 2$, and the assumptions for the $I$ and $J$ are specified in Assumption
\ref{a:IJ}.
Equation \eqref{e:IntSys} can be considered as a SPDEs in some Banach space, we shall
study the existence of the dynamics, Markov property and the exponential mixing property. When $Z(t)$ is Wiener noise, the equation \eqref{e:IntSys} has been intensively studied in modeling quantum spin systems in the 90s of last century (see e.g. \cite{AKYR95}, \cite{AKYT94}, \cite{DPZ96}, $\cdots$). \emph{Besides} this, we have the other two motivations to study \eqref{e:IntSys} as follows. \\

The first motivation is to extend the known existence and ergodic results about the interacting system in Chapter 17 of \cite{PeZa07}. In that book, some interacting systems similar to \eqref{e:IntSys} were studied under the framework of SPDEs (\cite{DPZ92}, \cite{DPZ96}). In order to prove the existence and ergodicity, one needs to assume that the noises are square integrable and that the interactions are linear and finite range. Comparing with the systems in \cite{PeZa07}, the white $\alpha$-stable noises in \eqref{e:IntSys} are \emph{not} square integrable, the interactions $I_i$ are not linear but \emph{Lipschitz} and have \emph{infinite range}. Moreover, we shall not work on Hilbert space but on some considerably large subspace $\mathbb B$ of $\R^{\Z^d}$, which seems more \emph{natural} (see Remark \ref{r:RemB}). The advantage of using this subspace is that we can split it into compact balls (under product topology) and control
some important quantities in these balls (see Proposition \ref{p:AppExiDif} for instance). Besides the techniques in
SPDEs, we shall also use those in interacting particle systems such as finite speed of propagation of information property.  \\

The second motivation is from the work by
 \cite{Ze96} on interacting unbounded spin systems driven by Wiener noise. The system studied there is also similar to \eqref{e:IntSys}, but has two essential differences. \cite{Ze96} studied a \emph{gradient} system perturbed by \emph{Wiener} noises, it is not hard to show the stochastic systems is reversible and admits a unique invariant measure $\mu$. Under the framework of $L^2(\mu)$,
the generator of the system is self-adjoint and thus we can construct dynamics by the spectral
decomposition technique. However, the deterministic part in \eqref{e:IntSys} is \emph{not} necessarily a gradient type and the noises are more general. This means that our system is possibly not reversible, so we have to construct the dynamics by some other method. More precisely, we shall prove the existence of the dynamics by studying some Galerkin approximation, and passing to its limit by the finite speed of propagation
and some uniform bounds of the approximate dynamics. On the other hand,
\cite{Ze96} proved the following pointwise ergodicity $|P_tf(x)-\mu(f)| \leq C(f,x) e^{-m t}$, where
$P_t$ is the semigroup generated by a reversible generator. The main tool for proving this ergodicity is by a logarithmic Sobolev inequality (LSI). Unfortunately, the LSI is not available in our setting, however, we can use the spirit of \emph{Bakry-Emery criterion} in LSI to obtain a \emph{gradient} bounds, from which we show the same ergodicity result as in \cite{Ze96}. We remark that although such strategy could be in principle applied to models considered in \cite{Ze96}, unlike the method based on LSI (where only asymptotic mixing is relevant), in the present level of technology it can only cover the weak interaction regime far from the `critical point'.
\ \\

Let us give two concrete examples for our system \eqref{e:IntSys}.
The first one is by setting $I_i(x)=\sum_{j \in \Z^d} a_{ij} x_j$
and $J_i(x_i)=-(1+\e)x_i-c x^{2n+1}_i$ with any $\e>0$, $c \geq
0$ and $n \in \N$ for all $i \in \Z^d$, where $(a_{ij})$ is a transition
probability of random walk on $\Z^d$. If we take $c=0$ and
$Z_i(t)=B_i(t)$ in \eqref{e:IntSys} with $(B_i(t))_{i \in \Z^d}$
i.i.d. standard Brownian motions, then this example is similar to
the neutral stepping stone model (see \cite{Da09}, or see a more
simple introduction in \cite{Xu09}) and the interacting diffusions
(\cite{GrHo07}, \cite{HuWa07}) in stochastic population dynamics. We
should point out that there are some essential differences between
these models and this example, but it is interesting to try our
method to prove the results in \cite{HuWa07}. The second example,
which has been introduced in \cite{OXZ08} in discrete dynamics, is
by setting $I_i(x)=\log\{\sum_{j \in \Z^d} a_{ji}e^{x_j}\}$ and
$J_i(x_i)=-(1+\e) x_i-c x_i^{2n+1}$, where $a_{ij}$, $\e$
and $c$ are the same as in the first example.   \\

The organization of the paper is as follows. Section 2 introduces some
notations and assumptions which will be used throughout the paper, and gives two key estimates.
In third and fourth sections, we shall prove the main theorems -- Theorem \ref{t:ConDyn} and Theorem \ref{t:Erg} respectively.
\\

{\bf Acknowledgements:} The first author would like to
thank the hospitality of Mathematics department of Universit$\acute{e}$
Paul Sabatier of Toulouse, part of his work was done during visiting Toulouse.
\section{Notations, assumptions, main results and two key estimates} \label{Introduction}
\subsection{Notations, assumptions and main results}
 We shall first introduce the definition of symmetric $\alpha$-stable processes
 ($0<\alpha \leq 2$), and then give more detailed description for the system \eqref{e:IntSys}. \\

Let $Z(t)$ be one dimensional $\alpha$-stable process ($0<\alpha \leq 2$), as $0<\alpha<2$, it has infinitesimal
generator $\partial^{\alpha}_x$ (\cite{ARW00}) defined by
\begin{equation} \label{e:fraclap}
\partial^{\alpha}_x f(x)=\frac{1}{C_{\alpha}} \int_{\mathbb{R} \setminus \{0\}} \frac{f(y+x)-f(x)}{|y|^{\alpha+1}}dy
\end{equation}
with $C_{\alpha}=
-\int_{\mathbb{R} \setminus \{0\}} (cosy-1)\frac{dy}{|y|^{1+\alpha}}.$  As $\alpha=2$, its generator is $\frac 12 \Delta$. One can also define $Z(t)$ by Poisson point processes or by Fourier transform (\cite{Be98}). The $\alpha$-stable property means
\begin{equation} \label{e:AlpStaPro}
Z(t) \stackrel{d}{=} t^{1/\alpha} Z(1).
\end{equation}
Note that we have use the symmetric property of $\p^\alpha_x$ in the easy identity $[\p^\alpha_x, \p_x]=0$ where $[\cdot, \cdot]$ is the Lie bracket. The \emph{white} symmetric $\alpha$-stable processes are defined by
$$\{Z_i(t)\}_{i \in \Z^d}$$
where $\{Z_i(t)\}_{i \in \Z^d}$ are a sequence of i.i.d. symmetric $\alpha$-stable process defined as the above.
\ \\

\indent We shall study the system \eqref{e:IntSys} on $\mathbb{B} \subset \mathbb{R}^{\mathbb{Z}^d}$ defined by $$\mathbb{B}=\bigcup \limits_{R>0, \rho>0}B_{R,\rho}$$
where for any $R,\rho>0$
 $$\ B_{R,\rho}=\{x=(x_i)_{i \in \Z^d}; |x_i| \leq R (|i|+1)^\rho\} \ \ {\rm with} \ \ |i|=\sum \limits_{k=1}^d |i_k|.$$

\begin{rem} \label{r:RemB}
The above $\mathbb B$ is a considerably large subspace of $\R^{\Z^d}$. Define the subspace $l_{-\rho}:= \{x \in \R^{\Z^d}; \  \sum_{k \in \Z^d} |k|^{-\rho} |x_k|<\infty\},$
it is easy to see that $l_{-\rho} \subset \mathbb B$ for all $\rho>0$. Moreover, one can also check that the distributions of the white $\alpha$-stable processes $(Z_i(t))_{i \in \Z^d}$ at any fixed time $t$ are supported on $\mathbb B$. From the form of the equation \eqref{e:IntSys}, one can expect that the distributions of the system at any fixed time $t$ is similar to those of white $\alpha$-stable processes but with some (complicated) shifts. Hence, it is \emph{natural} to study \eqref{e:IntSys} on $\mathbb B$.
\end{rem}
\begin{assumption} [Assumptions for $I$ and $J$] \label{a:IJ}
The $I$ and $J$ in \eqref{e:IntSys} satisfies the following conditions:
\begin{enumerate}
\item For all $i \in \Z^d$, $I_i: \mathbb B \longrightarrow \R$ is a continuous function
under the product
topology on $\mathbb B$ such that
\begin{equation*} \label{e:AssI}
|I_i(x)-I_i(y)| \leq \sum_{j \in \Z^d} a_{ji} |x_j-y_j|
\end{equation*}
where $a_{ij} \geq 0$ satisfies the conditions: $\exists$ some constants $K,K^{'},\gamma>0$ such that as $|i-j| \geq K^{'}$
\begin{equation*} \label{e:DecA}
a_{ij} \leq K e^{-|i-j|^\gamma}.
\end{equation*}
\item For all $i \in \Z^d$, $J_i: \R \longrightarrow \R$ is a differentiable function such that
\begin{equation*} \label{e:AssJ1}
\frac{d}{dx} J_i(x) \leq 0 \ \ \ \forall \ x \in \R;
\end{equation*}
and for some $\kappa, \kappa^{'}>0$
\begin{equation*} \label{e:AssJ2}
|J_i(x)| \leq \kappa^{'}(|x|^{\kappa}+1) \ \ \ \ \forall \ \ x \in \R.
\end{equation*}
\item $\eta:=\left(\sup_{j \in \Z^d} \sum_{i \in \Z^d} a_{ij}\right) \vee \left(\sup_{i \in \Z^d} \sum_{j \in \Z^d} a_{ij}\right)<\infty, \ \ \ c:=\inf \limits_{i \in \Z^d, y \in \R} \left(-\frac{d}{dy}J_i(y)\right).$
\end{enumerate}
\end{assumption}
\ \\
\indent \emph{Without loss of generality}, we assume that $I_i(0)=0$ for all
 $i \in \Z^d$ and that $K^{'}=0$, $K=1$ and $\gamma=1$ in Assumption \ref{a:IJ} from now on, i.e.
\begin{equation} \label{e:Aij}
a_{ij} \leq  e^{-|i-j|} \ \ \ \forall \ i,j \in \Z^d.
\end{equation}
Without loss of generality, we also assume from now on
\begin{equation} \label{e:J=0}
J_i(0)=0 \ \ \ \forall \ i \in \Z^d.
\end{equation}
\ \\

Let us now list some notations to be frequently used in the
paper, and then give the main results, i.e. Theorems \ref{t:ConDyn} and \ref{t:Erg}.
\begin{itemize}
\item Define $|i-j|=\sum
_{1 \leq k \leq d}|i_k-j_k|$ for any $i,j \in \Z^d$, define $|\Lambda|$ the cardinality of any given finite set $\Lambda \subset \Z^d$.
\item For the national simplicity, we shall write $\partial_i:=\partial_{x_i}$, $\p_{ij}:=\p^2_{x_i x_j}$ and $\partial^{\alpha}_i:=\partial^{\alpha}_{x_i}$. It is easy to see that
$[\p^{\alpha}_i, \p_j]=0$ for all $i,j \in \Z^d$.
\item For any finite sublattice $\Lambda \subset \subset \mathbb{Z}^d$,
let $C_b(\R^{\Lambda},\R)$ be the bounded continuous function space from $\R^{\Lambda}$ to $\R$, denote
$\mathcal{D}=\bigcup_{\Lambda \subset \subset
\mathbb{Z}^d} C_b(\R^{\Lambda},\R)$ and
$$\mathcal{D}^{k}=\{f \in
\mathcal{D}; f \ {\rm has} \  {\rm bounded} \ 0, \cdots,\ kth \ {\rm order} \ {\rm derivatives}\}.$$
\item For any $f \in \mathcal{D}$, denote $\Lambda(f)$ the
localization set of $f$, i.e. $\Lambda(f)$ is the smallest set
$\Lambda \subset \mathbb{Z}^d$ such that $f \in C_b(\R^{\Lambda},\R)$.
\item For any $f \in C_b(\mathbb B,\R)$, define $||f||=\sup_{x \in \mathbb B} |f(x)|$. For any $f \in \mcl D^1$, define $|\nabla f(x)|^2=\sum_{i \in \Z^d} |\p_i f(x)|^2$ and
    $$|||f|||=\sum_{i \in \Z^d} ||\p_i f||.$$
\end{itemize}

\begin{thm} \label{t:ConDyn}
There exists a Markov semigroup $P_t$ on the space $\mcl B_b(\mathbb B, \R)$
generated by the system \eqref{e:IntSys}.
\end{thm}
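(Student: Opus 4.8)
The plan is to construct the dynamics by a finite-volume (Galerkin) approximation and then pass to the limit using a finite speed of propagation estimate together with uniform a priori bounds. A convenient first reduction is to subtract off the noise: writing $Y_i(t):=X_i(t)-Z_i(t)$, the system \eqref{e:IntSys} becomes the random (pathwise) system of ODEs $\frac{d}{dt}Y_i(t)=J_i(Y_i(t)+Z_i(t))+I_i(Y(t)+Z(t))$, which removes the stochastic integral and makes all the subsequent estimates amenable to ordinary comparison/Gronwall arguments. For each finite box $\Lambda_n=\{i\in\Z^d:|i|\le n\}$ I would then consider the truncated system in which only the coordinates in $\Lambda_n$ evolve while the remaining ones are frozen at $x_j$. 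Since $I_i$ is Lipschitz (Assumption \ref{a:IJ}(1)) and $J_i$ is dissipative, $\frac{d}{dx}J_i\le 0$, with the polynomial growth bound of Assumption \ref{a:IJ}(2), each truncated system is a finite-dimensional SDE with locally Lipschitz, dissipative coefficients driven by $\{Z_i\}_{i\in\Lambda_n}$, so it admits a unique strong solution $X^{(n)}(t)$ with no explosion.

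Second, I would establish uniform a priori bounds showing that if $x\in B_{R,\rho}$ then the approximations $X^{(n)}(t)$ remain, uniformly in $n$ and on compact time intervals, in a (possibly larger) ball $B_{R',\rho}$. The mechanism is that the dissipative drift $J_i$ pulls each coordinate back, the interaction $I_i$ only transports mass between sites with the exponentially decaying weights $a_{ij}\le e^{-|i-j|}$ of \eqref{e:Aij}, and the $\alpha$-stable increments, scaling as $Z_i(t)\stackrel{d}{=}t^{1/\alpha}Z_i(1)$ by \eqref{e:AlpStaPro}, have tails whose growth in $i$ is compatible with the polynomial profile $(|i|+1)^\rho$ defining $B_{R,\rho}$ (see Remark \ref{r:RemB}). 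This is exactly the type of control furnished by Proposition \ref{p:AppExiDif}, one of the two key estimates of this section.

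Third, and this is the crux, I would prove a \emph{finite speed of propagation} estimate for the difference $D_i(t):=X_i^{(n)}(t)-X_i^{(m)}(t)$ of two approximations with $m>n$. Since both are driven by the same noise $Z_i$, the stochastic terms cancel and $D_i$ solves a random system of ODEs. Using the dissipativity of $J_i$ to absorb the diagonal term and the Lipschitz bound on $I_i$, one obtains the infinite linear differential inequality $\frac{d}{dt}|D_i(t)|\le\sum_{j}a_{ji}|D_j(t)|$, whose Green's function inherits, through \eqref{e:Aij}, exponential decay in $|i-j|$. Because $D_j(0)=0$ for $j\in\Lambda_n$ while the nonzero initial discrepancies sit on $\Lambda_m\setminus\Lambda_n$, which is far from any fixed site $i$, a Gronwall/comparison argument makes $|D_i(t)|$ exponentially small in the distance from $i$ to the boundary. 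Hence $(X_i^{(n)}(t))_n$ is Cauchy in each coordinate and the limit $X(t)$ exists in the product topology on $\mathbb B$.

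Finally, I would identify $X(t)$ as a solution of \eqref{e:IntSys} by passing to the limit in the integrated form, using continuity of $I_i$ under the product topology together with the uniform bounds to control the drift integrals. Setting $P_tf(x):=\E[f(X^x(t))]$ for $f\in\mcl B_b(\mathbb B,\R)$, the Markov and semigroup properties follow from a flow property, which rests on the same difference estimate applied to two solutions with distinct initial data $x,y$: the bound $\frac{d}{dt}|X_i^x(t)-X_i^y(t)|\le\sum_j a_{ji}|X_j^x(t)-X_j^y(t)|$ yields both uniqueness and continuous dependence, so $P_t$ maps $\mcl B_b(\mathbb B,\R)$ into itself and satisfies Chapman--Kolmogorov. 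The main obstacle I anticipate lies in Steps two and three under the heavy tails of the $\alpha$-stable noise: the infinite variance for $\alpha<2$ forbids second-moment methods, so the uniform bounds and the propagation estimate must be obtained pathwise (or via fractional moments $\E|Z_i(1)|^p$ with $p<\alpha$) and carefully matched against the polynomial growth of $J_i$, which is precisely where the decomposition of $\mathbb B$ into compact balls $B_{R,\rho}$ becomes essential.
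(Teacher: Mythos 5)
Your proposal is correct in substance but follows a genuinely different route from the paper's. You construct the limit \emph{process} pathwise: two finite-volume approximations driven by the same noise are coupled, the stochastic integrals cancel, and a Gronwall-type inequality with kernel $(a_{ji})$ makes the coordinates a Cauchy family, after which the semigroup is just $P_tf(x)=\E[f(X^x(t))]$. The paper never takes a pathwise limit of processes; it constructs the limit \emph{semigroup} directly. Its finite speed of propagation property (Lemma \ref{l:FinSpePro}) is a bound on $\|\partial_k P^N_t f\|$ for cylinder functions $f\in\mathcal D^2$, obtained by differentiating $P^N_{t-s}(\partial_k P^N_s f)^2$ and using the commutator \eqref{e:ComLNPi}; the Cauchy property of $P^N_tf(x)$ in $N$ then comes from estimating $(\mathcal L_M-\mathcal L_N)P^N_sf$ against the moment bounds of Proposition \ref{p:AppExiDif}, and $P_t$ is extended to $\mathcal B_b(\mathbb B)$ via the Riesz representation theorem on the compact balls $B_{R,\rho}$. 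Your route yields more (an actual solution process on $\mathbb B$, and your contraction estimate for $|X^x_i-X^y_i|$ would essentially reprove the gradient bound of Lemma \ref{l:GraDec}), at the cost of measurability and flow-property bookkeeping for an infinite-dimensional pathwise limit; the paper's route stays at the level of test functions and produces the estimate \eqref{e:expfinite} that is reused verbatim in the ergodicity proof. One imprecision you should repair: the discrepancy between $X^{(n)}$ and $X^{(m)}$ is not only in ``initial data on $\Lambda_m\setminus\Lambda_n$''. Since the two systems see different configurations outside $\Lambda_n$ (frozen at $x_j$ versus evolving as $X^{(m)}_j(t)$), your differential inequality carries a persistent source term $\sum_{j\in\Lambda_m\setminus\Lambda_n}a_{ji}\,|x_j-X^{(m)}_j(t)|$ active for all $t$, not just at $t=0$; it is controlled by pairing the a priori moment bound $\E|X^{(m)}_j(t)|\le C(1+|j|^\rho)(1+t)e^{(1+\eta)t}$ with the decay $a_{ji}\le e^{-|i-j|}$, which is exactly the pairing the paper performs in \eqref{e:PMtf-PNtfSqu}.
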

\begin{thm} \label{t:Erg}
If $c \geq \eta+\delta$ with any $\delta>0$ and $c, \eta$ defined in
(3) of Assumption \ref{a:IJ}, then there exists some probability
measure $\mu$ supported on $\mathbb B$ such that for all $x \in
\mathbb B$,
$$\lim_{t \rightarrow \infty} P^*_t \delta_x = \mu \ \ \ {\rm weakly}.$$
Moreover, for any $x \in \mathbb B$ and $f \in \mcl D^2$, there exists some
$C=C(\Lambda(f),\eta,c,x)>0$  such that we
have
\begin{equation} \label{e:ExpMix}
\left|\int_{\mathbb B} f(y) d P^*_t \delta_x-\mu(f)\right| \leq C
e^{-\frac18 \wedge \frac \delta 2 t} |||f|||.
\end{equation}
\end{thm}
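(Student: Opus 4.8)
The plan is to deduce everything from two ingredients: an $\alpha$-stable gradient bound producing a contraction of $|||\cdot|||$ at rate $c-\eta\ge\delta$, and the finite speed of propagation estimate of Section~\ref{Introduction}, combined through a near/far spatial decomposition. First I would establish the gradient bound. Writing $\mathcal L$ for the generator of \eqref{e:IntSys} and $u(t,\cdot)=P_tf$, the function $w_k:=\p_k u$ solves, after commuting $\p_k$ through $\mathcal L$ and using $[\p^\alpha_i,\p_j]=0$,
\begin{equation*}
\p_t w_k=\mathcal L w_k+J_k'(x_k)\,w_k+\sum_{i\in\Z^d}\p_kI_i(x)\,w_i .
\end{equation*}
At a point where $|w_k|$ attains $\|w_k\|$ the first-order part of the drift in $\mathcal L$ vanishes and the jump part $\p^\alpha_k w_k$ has the favourable sign, so the maximum principle gives $\tfrac{d}{dt}\|w_k\|\le J_k'\|w_k\|+\sum_i|\p_kI_i|\,\|w_i\|$. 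Using $J_k'\le-c$ from part (2) of Assumption~\ref{a:IJ}, $|\p_kI_i|\le a_{ki}$ from part (1), and $\sum_k a_{ki}\le\eta$ from part (3), summation over $k$ yields
\begin{equation*}
\frac{d}{dt}|||P_tf|||\le-(c-\eta)\,|||P_tf|||,\qquad\text{hence}\qquad |||P_tf|||\le e^{-(c-\eta)t}|||f|||\le e^{-\delta t}|||f|||.
\end{equation*}
I would make this rigorous on the Galerkin approximants of Theorem~\ref{t:ConDyn} and pass to the limit.

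The core estimate is then a two-point bound, proved without reference to $\mu$. For $f\in\mathcal D^2$ and $x,y\in\mathbb B$, changing coordinates one at a time and using the mean value theorem,
\begin{equation*}
|P_tf(x)-P_tf(y)|\le\sum_{k\in\Z^d}\|\p_kP_tf\|\,|x_k-y_k|.
\end{equation*}
Writing $r_k:=\min_{j\in\Lambda(f)}|k-j|$, I split according to $r_k\le t$ or $r_k>t$. On the near part the gradient bound controls $\sum_k\|\p_kP_tf\|\le e^{-\delta t}|||f|||$, while on $B_{R,\rho}$ one has $|x_k-y_k|\lesssim t^{\rho}$ for $r_k\le t$; since $t^\rho e^{-\delta t}\le Ce^{-\frac\delta2 t}$, this contributes at rate $\tfrac\delta2$. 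On the far part the finite speed of propagation estimate of Section~\ref{Introduction}, which rests on the exponential decay \eqref{e:Aij}, bounds each individual $\|\p_kP_tf\|$ by a quantity decaying geometrically in $r_k$; against the polynomial growth of $|x_k-y_k|$ on $\mathbb B$, with the cutoff $r_k>t$, this contributes at a fixed rate $\tfrac18$, independent of $\delta$. Hence
\begin{equation*}
|P_tf(x)-P_tf(y)|\le C(\Lambda(f),\eta,c,x,y)\,e^{-(\frac18\wedge\frac\delta2)t}|||f|||,
\end{equation*}
with $C$ polynomial in the coordinates of $x$ and $y$.

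To construct $\mu$, the uniform moment bounds of Section~\ref{Introduction} together with the compactness of the balls $B_{R,\rho}$ in the product topology make $\{P^*_t\delta_x\}_{t\ge0}$ tight. I then show $t\mapsto P_tf(x)$ is Cauchy: from $P_{t+s}f(x)-P_tf(x)=\int_{\mathbb B}(P_tf(z)-P_tf(x))\,P^*_s\delta_x(dz)$ and the two-point estimate, the integrand is $O(e^{-(\frac18\wedge\frac\delta2)t})$, and integrating the polynomial constant against the uniformly bounded moments of $P^*_s\delta_x$ gives $|P_{t+s}f(x)-P_tf(x)|\le C'e^{-(\frac18\wedge\frac\delta2)t}$ uniformly in $s$. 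Thus $\ell(f):=\lim_tP_tf(x)$ exists, is $x$-independent by the gradient bound, and extends by tightness to a probability measure $\mu$ on $\mathbb B$ with $P^*_t\delta_x\to\mu$ weakly and $\mu(P_sf)=\lim_tP_{t+s}f(x)=\mu(f)$. Integrating the two-point estimate against $\mu$ and removing the $y$-dependence of the constant by the first moments of $\mu$ (finite since $1<\alpha$) yields
\begin{equation*}
|P_tf(x)-\mu(f)|\le\int_{\mathbb B}|P_tf(x)-P_tf(y)|\,\mu(dy)\le C(\Lambda(f),\eta,c,x)\,e^{-(\frac18\wedge\frac\delta2)t}|||f|||,
\end{equation*}
which is \eqref{e:ExpMix}.

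The main obstacle is the far part of the decomposition: one must control each $\|\p_kP_tf\|$ individually, not merely their sum, and show that its geometric decay in $r_k$ genuinely dominates the polynomial growth of the coordinates on $\mathbb B$, uniformly after integrating $y$ against $\mu$; this is precisely where the finite speed of propagation is indispensable and where the fixed rate $\tfrac18$ originates, capping the mixing rate when $\delta$ is large. A secondary technical point is the rigorous justification of the gradient-bound computation in infinite dimensions, which requires carrying out the maximum-principle argument on the Galerkin approximants of Theorem~\ref{t:ConDyn} and controlling the passage to the limit.
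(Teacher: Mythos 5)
Your strategy is essentially the paper's: a Bakry--Emery-style gradient bound giving $\|\p_k P^N_t f\|\le e^{-\delta t}|||f|||$ (Lemma \ref{l:GraDec}), the finite speed of propagation estimate \eqref{e:expfinite} for sites far from $\Lambda(f)$, a near/far decomposition of $\sum_k\|\p_k P^N_tf\|\,|x_k-y_k|$ against the uniform moment bound (2) of Proposition \ref{p:AppExiDif}, a Cauchy-in-$t$ argument via $P_{t+s}f(x)-P_tf(x)=\E_x[P_tf(X(s))-P_tf(x)]$, and a Riesz/tightness construction of $\mu$; the rates $\tfrac\delta2$ and $\tfrac18$ arise exactly as you describe. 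Two deviations are worth noting. First, a concrete slip: you place the near/far cutoff at $r_k\le t$ versus $r_k>t$, but \eqref{e:expfinite} only applies when $n_k=[\sqrt{r_k}]>Bt$, i.e.\ $r_k\gtrsim B^2t^2$; for $t<r_k\le B^2t^2$ neither of your two estimates covers $\|\p_kP_tf\|\,|x_k-y_k|$ as written. The fix is exactly the paper's choice $\mathrm{dist}(\Lambda^c,\Lambda(f))=B^2t^2$, which only upgrades the near-part factor from $t^\rho$ to a power of $t^2$ and leaves the rate $e^{-\delta t/2}$ intact. Second, you derive the contraction as an $\ell^1$ bound $|||P_tf|||\le e^{-(c-\eta)t}|||f|||$ via a maximum principle for the system satisfied by $w_k=\p_kP_tf$; the attainment of the supremum on the unbounded state space is delicate even for the Galerkin system, and the paper's route---differentiating $P^N_{t-s}|\nabla P^N_sf|^2$ and using the quadratic form bound \eqref{e:BECri2}---is the cleaner rigorous version, yielding the $\ell^2$ inequality \eqref{e:GB} which implies the same individual bounds $\|\p_kP^N_tf\|\le e^{-\delta t}|||f|||$ that the argument actually uses. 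With these two repairs your proof goes through and coincides with the paper's.
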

\ \\
\subsection{Two key estimates}
\label{s:GenOU}
In this subsection, we shall give an estimate for the operator $a$ and $a+\delta$, where $a$ is defined in Assumption \ref{a:IJ} and $\delta$ is the Krockner's function, and also an estimate for a generalized 1 dimensional Ornstein-Uhlenbeck $\alpha$-stable process
governed by \eqref{e:OUAlpGen}.
\\

\subsubsection{Estimates for $a$ and $a+c\delta$} The lemma below will play an important role in several places such as proving \eqref{e:expfinite}. If $(a_{ij})_{i,j \in \Z^d}$ is the transition probability of a random walk on
$\Z^d$, then \eqref{e:EstDel+a} with $c=0$ gives an estimate for the transition probability of the $n$ steps walk.
\begin{lem} \label{l:EstA}
Let $a_{ij}$ be as in Assumption \ref{a:IJ} and satisfy \eqref{e:Aij}. Define
$$[(c\delta+a)^n]_{ij}:=\sum \limits_{i_1, \cdots i_{n-1} \in \Z^d} (c \delta+a)_{ii_1} \cdots (c\delta+a)_{i_{n-1}j}$$
where $c \geq 0$ is some constant and $\delta$ is the Krockner's function, we have
\begin{equation} \label{e:EstDel+a}
[(c\delta+a)^n]_{ij} \leq (c+\eta)^n \sum_{k \geq |j-i|} (2k)^{nd} e^{-k}
\end{equation}
\end{lem}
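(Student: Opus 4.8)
The plan is to prove, by induction on $n$, the pointwise bound
\[
[(c\delta+a)^n]_{ij}\le (c+\eta)^n\,F_n(|i-j|),\qquad F_n(r):=\sum_{k\ge r}(2k)^{nd}e^{-k},
\]
which is exactly \eqref{e:EstDel+a}. Every entry of $c\delta+a$ is nonnegative and, by \eqref{e:Aij}, $a_{ij}\le e^{-|i-j|}$, so I may use monotonicity freely; note also that each $F_n$ is nonincreasing in $r$ and that $F_n\le F_{n+1}$, since $(2k)^d\ge1$ for $k\ge1$ (the $k=0$ term vanishes). The base case $n=0$ reads $\delta_{ij}\le F_0(|i-j|)$, which holds because $F_0(0)=\sum_{k\ge0}e^{-k}=e/(e-1)\ge1$ and $F_0\ge0$.

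For the inductive step I would use the one-step decomposition
\[
[(c\delta+a)^{n+1}]_{ij}=c\,[(c\delta+a)^n]_{ij}+\sum_{l\in\Z^d}a_{il}\,[(c\delta+a)^n]_{lj},
\]
and insert the inductive bound on each factor $[(c\delta+a)^n]_{\,\cdot\, j}$. The first term is controlled by $c\,(c+\eta)^nF_n(|i-j|)\le c\,(c+\eta)^nF_{n+1}(|i-j|)$ using $F_n\le F_{n+1}$. Thus everything reduces to the single key inequality
\[
\sum_{l\in\Z^d}a_{il}\,F_n(|l-j|)\le \eta\,F_{n+1}(|i-j|),
\]
since adding the two contributions then produces the required factor $(c+\eta)^{n+1}$.

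To establish this crux I would set $r=|i-j|$ and group the sum over $l$ into $\ell^1$-shells $|i-l|=s$. Writing $w_s:=\sum_{|i-l|=s}a_{il}$, the triangle inequality gives $|l-j|\ge (r-s)_+$, and monotonicity of $F_n$ yields $\sum_l a_{il}F_n(|l-j|)\le\sum_{s\ge0}w_s\,F_n((r-s)_+)$. Here I have two complementary controls on the shell masses: the total-mass bound $\sum_{s\ge0}w_s\le\eta$ from Assumption \ref{a:IJ}(3), and the decaying bound $w_s\le \#\{l:|i-l|=s\}\,e^{-s}\le(2s+1)^d e^{-s}$ from \eqref{e:Aij}. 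Using the first to extract the factor $\eta$ and the second (whose shell polynomial $(2s+1)^d$ convolves against the weight $(2k)^{nd}$ inside $F_n$) to raise the degree from $nd$ to $(n+1)d$ while retaining the full rate $e^{-k}$, one resums over $k\ge r$ to obtain $\eta\,F_{n+1}(r)$.

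The main obstacle is exactly this last step: the two bounds on $w_s$ must be used \emph{simultaneously}, so that the clean prefactor $\eta$ (from the total mass) is produced at the same time as the exact degree increase and without any loss in the exponential rate. A cheaper route --- bounding $(c\delta+a)_{kl}\le(c+1)e^{-|k-l|}$ and then counting length-$(n+1)$ lattice paths of total $\ell^1$-length $k$ by $(2k)^{(n+1)d}$ --- proves the statement with $(c+1)^{n+1}$ in place of $(c+\eta)^{n+1}$, which already suffices whenever $\eta\ge1$; recovering the sharp constant $(c+\eta)^{n+1}$ in all cases is what forces the shell bookkeeping above.
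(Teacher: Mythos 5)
Your induction is a genuinely different route from the paper's: the paper proves \eqref{e:EstDel+a} in one shot by expanding $[(c\delta+a)^n]_{ij}$ over $(n+1)$-vertex lattice paths $\gamma$ from $i$ to $j$, bounding the number of paths of $\ell^1$-length $k$ by $(2k)^{dn}$ and the product of entries along each path by $(c+\eta)^n e^{-|\gamma|}$. That is exactly the ``cheaper route'' of your last paragraph, so the roles are reversed from what you suppose: the path count is the intended argument, and your induction is the would-be refinement.

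Unfortunately the refinement cannot be completed, because the crux inequality $\sum_{l}a_{il}\,F_n(|l-j|)\le\eta\,F_{n+1}(|i-j|)$ is not just unproven but false. Take $d=1$, fix a large $r$, and let $a_{il}=e^{-r}$ if $l=i+r$ and $a_{il}=0$ otherwise; this satisfies \eqref{e:Aij} and (3) of Assumption \ref{a:IJ} with $\eta=e^{-r}$. For $j=i+r$ and $n=0$ the left-hand side equals $e^{-r}F_0(0)=e^{-r}\tfrac{e}{e-1}$, while the right-hand side equals $e^{-r}F_1(r)$ with $F_1(r)=\sum_{k\ge r}2ke^{-k}\to0$ as $r\to\infty$, so the inequality fails for $r$ large. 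The failure is structural: when the row mass of $a$ sits far from the diagonal, $\eta$ is small precisely \emph{because} of the exponential decay, so the prefactor $\eta$ and the shift in the argument of $F$ are the same resource and cannot both be harvested --- no bookkeeping with the two shell bounds will produce them simultaneously. The same example shows the constant $(c+\eta)^n$ in \eqref{e:EstDel+a} itself is problematic when $c+\eta<1$ (for $n=1$, $c=0$ it would require $e^{-r}\le e^{-r}\sum_{k\ge r}(2k)^d e^{-k}$), and correspondingly the paper's product bound $\prod_k(a+c\delta)_{\gamma(k),\gamma(k+1)}\le(c+\eta)^n e^{-|\gamma|}$ tacitly attaches a factor $c+\eta\ge1$ to each moving step. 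So the $(c+1)^n$ (more precisely $((c+\eta)\vee1)^n$) version you describe at the end is the one that is actually provable from the stated hypotheses; I would prove that via the path expansion and stop there, rather than chase the sharp constant.
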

\begin{rem} \label{r:Aij}
Without the additional assumption \eqref{e:Aij}, one can also have the similar estimates as above, for instance, as
$|i-j| \geq K^{'}$, $(a^n)_{ij} \leq \eta^n \sum_{k \geq |j-i|} (Ck)^{nd} \exp\{-k^{\gamma/2}\}.$
 The $C>0$ is some constant depending on $K,K^{'}$ and $\gamma$, and will not play any essential roles in the later arguments.
\end{rem}
\begin{proof}
Denote the collection of the (n+1)-vortices pathes connecting $i$ and $j$ by $\gamma^n_{i \sim j}$, i.e.
$$\gamma^n_{i \sim j}=\{(\gamma(i))_{i=1}^{n+1}: \ \gamma(1)=i,  \gamma(2) \in \Z^d, \cdots, \gamma(n) \in \Z^d, \gamma(n+1)=j\},$$
for any $\gamma \in \gamma^n_{i \sim j}$, define its length by
$$|\gamma|=\sum_{k=1}^{n} |\gamma(k+1)-\gamma(k)|.$$
We have
\begin{equation} \label{e:PatEst}
\begin{split}
[(a+c\delta)^n]_{ij}&=\sum_{\gamma \in \gamma^n_{i \sim j}}(a+\delta)_{\gamma(1), \gamma(2)} \cdots
(a+c\delta)_{\gamma(n), \gamma(n+1)}  \\
&\leq \sum_{|\gamma|=|i-j|}^{\infty}  (2|\gamma|)^{dn} (c+\eta)^n e^{-|\gamma|}
 \end{split}
\end{equation}
where the inequality is obtained by the following observations:
\begin{itemize}
\item $\min_{\gamma \in \gamma^n_{i \sim j}} |\gamma| \geq |i-j|$.
\item the number of the pathes in $\gamma^n_{i \sim j}$ with length $|\gamma|$ is bounded by
$[(2|\gamma|)^d]^n$
\item  $(a+c\delta)_{\gamma(1), \gamma(2)} \cdots
(a+c\delta)_{\gamma(n), \gamma(n+1)}=\prod \limits_{\{k;\gamma(k+1)=\gamma(k)\}} (a+c\delta)_{\gamma(k), \gamma(k+1)} \times$
$\prod \limits_{\{k;\gamma(k+1) \neq \gamma(k)\}} a_{\gamma(k), \gamma(k+1)} \leq (c+\eta)^n e^{-|\gamma|}.$
\end{itemize}
\end{proof}

\subsubsection{1d generalized Ornstein-Uhlenbeck $\alpha$-stable processes}
Our generalized $\alpha$-stable processes satisfies the following SDE
\begin{equation} \label{e:OUAlpGen}
\begin{cases}
dX(t)=J(X(t))dt+dZ(t) \\
X(0)=x
\end{cases}
\end{equation}
where $X(t),x \in \R$, $J: \R \rightarrow \R$ is differentiable function with polynomial growth, $J(0)=0$ and
$\frac{d}{dx} J(x) \leq 0$, and $Z(t)$ is a one dimensional symmetric $\alpha$-stable process with
$1<\alpha \leq 2$. One
can write $J(x)=\frac{J(x)}{x} x$, clearly $\frac{J(x)}{x} \leq 0$ with the above assumptions
(it is natural to define $\frac{J(0)}{0}=J^{'}(0)$).
$J(x)=-c x \  (c>0)$ is a special case of the above $J$, this is the motivation to call
\eqref{e:OUAlpGen} the generalized Ornstein-Uhlenbeck $\alpha$-stable processes. The following uniform bound is important for proving  (2) of Proposition \ref{p:AppExiDif}. \\
\begin{prop} \label{p:ErgCriGen}
Let $X(t)$ be the dynamics governed by \eqref{e:OUAlpGen} and denote $\mcl E(s,t)=\exp \{\int_s^t \frac{J(X(r))}{X(r)} dr\}$. If $\sup \limits_{x \in \R} \frac{J(x)}{x} \leq -\e$ with any $\e>0$,
then
\begin{equation} \label{e:UniBouGenOU}
\E_x\left|\int_0^t \mcl E (s,t)dZ_s\right|<C(\alpha,\e)
\end{equation}
where $C(\alpha,\e)>0$ only depends on $\alpha,\e$.
In particular, if $J(x)=-\e x$, $X(t)$ is $L^1$ ergodic, i.e. there exists some random variable $\xi \in L^1(\mathbb P)$, which is independent of $x$, such that $X(t) \stackrel{L^1}{\rightarrow} \xi$.
\end{prop}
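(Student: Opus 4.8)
The plan is to reduce \eqref{e:UniBouGenOU} to a uniform moment bound for the linear SDE solved by the quantity in question, and to treat the $L^1$ ergodicity by reversing the driving noise. First I would remove the anticipating appearance of $\mcl E(s,t)$. Setting $b(r):=J(X(r))/X(r)$, the hypothesis gives $b(r)\le-\e$, hence $\mcl E(s,t)=\exp\{\int_s^t b(r)\,dr\}\le e^{-\e(t-s)}$. Since $\mcl E(s,t)=\mcl E(0,t)\,\mcl E(0,s)^{-1}$, I write
\[ W(t):=\int_0^t \mcl E(s,t)\,dZ_s=\mcl E(0,t)\int_0^t \mcl E(0,s)^{-1}\,dZ_s, \]
where now the integrand $\mcl E(0,s)^{-1}$ is adapted. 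As $t\mapsto\mcl E(0,t)$ is continuous of finite variation with $d\mcl E(0,t)=b(t)\mcl E(0,t)\,dt$ and therefore has no bracket against $Z$, the product rule shows $W$ solves $dW=b(t)W\,dt+dZ$ with $W(0)=0$. Thus \eqref{e:UniBouGenOU} becomes a uniform bound on $\E|W(t)|$, in which $x$ enters only through $b$ via the single inequality $b\le-\e$.

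Next I would run a Lyapunov estimate with $f(w):=\sqrt{1+w^2}$, which is even, $C^2$, and satisfies $|w|\le f(w)$, $\|f''\|_\infty\le1$, $|f'|\le1$, together with the identity $wf'(w)=f(w)-1/f(w)\in[0,f(w)]$. Applying Itô's formula to $e^{\e s}f(W_s)$ along $dW=bW\,dt+dZ$, localized by $\tau_n:=\inf\{s:|W_s|\ge n\}$ so that the compensated jump martingale has zero expectation, gives
\[ \E\big[e^{\e(t\wedge\tau_n)}f(W_{t\wedge\tau_n})\big]=1+\E\int_0^{t\wedge\tau_n}e^{\e s}\big(\e f(W_s)+b(s)W_s f'(W_s)+\p^\alpha f(W_s)\big)\,ds. \]
Using $b\le-\e$ and $wf'(w)\ge f(w)-1$, the integrand is bounded by $e^{\e s}(\e+C_f)$ with $C_f:=\sup_w|\p^\alpha f(w)|$, so the right-hand side is at most $1+\frac{\e+C_f}{\e}(e^{\e t}-1)$. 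Letting $n\to\infty$ by Fatou yields $\E f(W_t)\le e^{-\e t}+\frac{\e+C_f}{\e}(1-e^{-\e t})\le1+C_f/\e$, and since $\E|W_t|\le\E f(W_t)$ this is \eqref{e:UniBouGenOU} with $C(\alpha,\e)=1+C_f/\e$.

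It remains to see that $C_f$ is finite and depends only on $\alpha$. Writing $\p^\alpha f(w)=\frac1{C_\alpha}\int\frac{f(w+y)+f(w-y)-2f(w)}{2|y|^{1+\alpha}}\,dy$, I split at $|y|=1$: for $|y|\le1$ the symmetric second difference is $\le\|f''\|_\infty y^2$ and $\int_{|y|\le1}|y|^{1-\alpha}\,dy<\infty$ since $\alpha<2$, while for $|y|>1$ the Lipschitz bound gives $\le2|y|$ and $\int_{|y|>1}|y|^{-\alpha}\,dy<\infty$ since $\alpha>1$ (for $\alpha=2$ the bound $C_f\le\tfrac12\|f''\|_\infty$ is immediate). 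This is the only place where both constraints in $1<\alpha\le2$ are genuinely used, and I expect this uniform control of $\p^\alpha f$, together with the localization required to discard the jump martingale — which is only $L^1$, not $L^2$, because the tails are heavy for $\alpha<2$ — to be the main technical obstacle.

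For the $L^1$ ergodicity when $J(x)=-\e x$, now $b\equiv-\e$ is deterministic, $\mcl E(s,t)=e^{-\e(t-s)}$, and $X(t)=e^{-\e t}x+\int_0^t e^{-\e(t-s)}\,dZ_s$, whose first term vanishes. For the stochastic term I would reverse time: with $\hat Z_u:=Z_t-Z_{t-u}$, which is again a symmetric $\alpha$-stable process by stationarity and symmetry of the increments, the substitution $u=t-s$ gives $\int_0^t e^{-\e(t-s)}\,dZ_s\stackrel{d}{=}N(t):=\int_0^t e^{-\e u}\,dZ_u$. Since $\alpha>1$, $Z$ is a mean-zero $L^1$ martingale, so $N$ is a martingale whose increment $N(t)-N(t')$ is symmetric $\alpha$-stable of scale $\big(\int_{t'}^t e^{-\alpha\e u}\,du\big)^{1/\alpha}\le(\alpha\e)^{-1/\alpha}e^{-\e t'}$; hence $\E|N(t)-N(t')|\le\E|Z_1|\,(\alpha\e)^{-1/\alpha}e^{-\e t'}\to0$, so $N(t)$ converges in $L^1$ to $\xi:=\int_0^\infty e^{-\e u}\,dZ_u\in L^1(\mathbb P)$, independent of $x$. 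Consequently $X(t)\to\xi$ in law, which gives the stated ergodicity; I would flag that the forward integral $\int_0^t e^{-\e(t-s)}\,dZ_s$ is not itself Cauchy in $L^1$, so passing to the reversed martingale is essential to exhibit the limit.
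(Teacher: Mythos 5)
Your proof of the uniform bound \eqref{e:UniBouGenOU} is correct but takes a genuinely different route from the paper's. The paper integrates by parts, writing $\int_0^t\mcl E(s,t)\,dZ_s=Z(t)-\int_0^tZ(s)\,d\mcl E(s,t)$, and then controls $\sup_{0\le s\le t}|Z(t)-Z(s)|/((t-s)^{1/\gamma}\vee1)$ for some $1<\gamma<\alpha$ via Doob's inequality and the scaling property \eqref{e:AlpStaPro}, paired with a Jensen-type bound on $\int_0^t((t-s)^{1/\gamma}\vee1)\,d\mcl E(s,t)$; the argument is pathwise in the noise and never touches the generator. You instead note that $W(t)=\int_0^t\mcl E(s,t)\,dZ_s$ solves $dW=b(t)W\,dt+dZ$ (your product-rule reduction is valid since $\mcl E(0,\cdot)$ is continuous and of finite variation, and only $b\le-\e$ and adaptedness of $b$ are used afterwards) and run a Lyapunov estimate with $f(w)=\sqrt{1+w^2}$, the key point being that $\sup_w|\p^\alpha f(w)|$ is finite and depends only on $\alpha$; your split of the singular integral at $|y|=1$, using $\alpha<2$ near the origin and $\alpha>1$ at infinity, is exactly where both constraints enter, as you say. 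Your route is more robust (it survives replacing $Z$ by any symmetric L\'evy process whose generator is uniformly bounded on $f$), at the cost of the integrability checks on the compensated jump martingale, which you correctly identify and handle by localization; the paper's route avoids It\^o's formula for jumps entirely.

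On the ``in particular'' part there is a substantive divergence that you should be aware of. The statement asserts $X(t)\to\xi$ in $L^1$, and the paper derives this from the claim that $\int_0^t\mcl E(s,t)\,dZ_s$ is a submartingale together with the submartingale convergence theorem. You prove only convergence in law, via time reversal to the genuine $L^1$-Cauchy martingale $\int_0^te^{-\e u}\,dZ_u$, and you flag that the forward integral is not $L^1$-Cauchy. Your caution is justified: $e^{-\e t}\int_0^te^{\e s}\,dZ_s$ is not a submartingale, and $X(t)-X(t')$ contains the independent increment $\int_{t'}^te^{-\e(t-s)}\,dZ_s$, a symmetric $\alpha$-stable variable whose scale tends to $(\alpha\e)^{-1/\alpha}>0$, so $X(t)$ is not Cauchy in $L^1$ and the $L^1$ convergence asserted in the statement cannot hold as written; convergence in law to the stationary distribution, which is what you establish, is the correct conclusion. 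Since only the uniform bound \eqref{e:UniBouGenOU} is used later (in (2) of Proposition \ref{p:AppExiDif}), this discrepancy does not propagate.
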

\begin{proof}
From (1) of Proposition \ref{p:AppExiDif}, we have
\begin{equation}
X(t)=\eot x+\int_0^t \est d Z(s).
\end{equation}

By integration by parts formula (\cite{Bi02}),
\begin{equation*}
\begin{split}
& \ \ \E \left|\int_0^t \est d Z(s)\right| \\
&=\E \left|Z(t)-\int_0^t Z(s) d \est\right|\\
& \leq \E \left|Z(t) \eot\right|+ \E \left|\int_0^t \left(Z(t)-Z(s) \right) d\est\right|.
 \end{split}
\end{equation*}
By \eqref{e:AlpStaPro}, the first term on the r.h.s. of the last line is bounded by
$$\E \left|Z(t) \eot\right| \leq e^{-\e t} \E|Z(t)| \leq C e^{-\e t} t^{1/\alpha} \rightarrow 0 \ \ \ (t\rightarrow \infty).$$
As for the second term, one has
\begin{equation*}
\begin{split}
& \ \ \E \left|\int_0^t \frac{\left(Z(t)-Z(s) \right)}{(t-s)^{1/\gamma} \vee 1} \left[(t-s)^{1/\gamma} \vee 1\right] d \est\right| \\ & \leq \E \left(\sup_{0 \leq s \leq t} \left|\frac{\left(Z(t)-Z(s) \right)}{(t-s)^{1/\gamma} \vee 1}\right| \left|\int_0^t \left[(t-s)^{1/\gamma} \vee 1\right] d \est\right|\right) \\
\end{split}
\end{equation*}
where $1<\gamma<\alpha$. It is easy to see that $\frac{d \est}{1-\eot}$ is a probability
measure on $[0,t]$, by Jessen's inequality, we have
\begin{equation*}
\begin{split}
& \ \ \left(\int_0^t (t-s)^{1/\gamma} \vee 1 d \est
\right)\\
&=\left(\int_0^t (t-s) \vee 1
\frac{d \est}{1-\eot}
\right)^{1/\gamma} (1-\eot) \\
&  \leq \left(\int_0^t (t-s) \vee 1 d \est \right)^{1/\gamma} \leq \left(\int_0^t \est ds\right)^{1/\gamma}+t\eot \leq C(\e,\gamma).
\end{split}
\end{equation*}
On the other hand, by Doob's martingale inequality and $\alpha$-stable property
\eqref{e:AlpStaPro}, for all $N \in \N$, we have
\begin{equation*}
\begin{split}
\E \sup_{1 \leq t \leq 2^N} \left|\frac{Z(t)}{t^{1/\gamma}}\right| & \leq \E \sum_{i=1}^{N}
 \sup_{2^{i-1} \leq t \leq 2^i} \left|\frac{Z(t)}{t^{1/\gamma}}\right| \leq \sum_{i=1}^{N}
\frac{\E \sup_{2^{i-1} \leq t \leq 2^i} |Z(t)|}{2^{(i-1)/\gamma}} \\
& \leq C \sum_{i=1}^{N} \frac{2^{i/\alpha}}{2^{(i-1)/\gamma}}  \leq C(\alpha,\gamma).
\end{split}
\end{equation*}
From the above three inequalities, we immediately have
\begin{equation*}
\begin{split}
\E \left|\int_0^t \left(Z(t)-Z(s) \right) d \est\right|  \leq C(\alpha, \gamma, \e).  
\end{split}
\end{equation*}
Collecting all the above estimates, we conclude the proof of \eqref{e:UniBouGenOU}.
\\

As $J(x)=-\e x$, it is clear that
$\eot x \rightarrow 0$ as $t \rightarrow 0$. On the other hand,
by \eqref{e:UniBouGenOU}, the easy fact that $\int_0^t \est dZ(s)$ is
a submartingale, and the submartingale convergence theorem, we immediately have that $\int_0^t \est dZ(s)$ converges to some
random variable $\xi$ in $L^1$ sense as $t \rightarrow \infty$. It is easy to see that $\xi$ is independent of the initial data $x$, thus $X(t)$ is $L^1$ ergodic.
\end{proof}
\section{Existence of Infinite Dimensional Interacting $\alpha$-stable Systems}
In order to prove the existence theorem of the equation \eqref{e:IntSys}, we shall first study
its Galerkin approximation, and uniformly bound some approximate quantities. To pass to the Galerkin approximation limit, we need
to apply a well known estimate in interacting particle systems -- finite speed of propagation of information property.

\subsection{Galerkin Approximation}
 Denote $\Gamma_N:=[-N,N]^d$, which is a cube in ${\mathbb Z}^d$ centered
 at origin. We approximate the infinite dimensional system by
\begin{equation} \label{e:GalApp}
\begin{cases}
d X^N_{i}(t)=[J_i(X_i^N(t))+I^N_i(X^N(t))]dt+dZ_i(t), \\
X^N_{i}(0)=x_i,
\end{cases}
\end{equation}
for all $i \in \Gamma_N$,
where $x^N=(x_i)_{i \in \Gamma_N}$ and $I^N_{i}(x^N)=I_i(x^N,0)$.
It is easy to see that \eqref{e:GalApp} can be written in the following
vector form
\begin{equation} \label{e:AppEqu}
\begin{cases}
dX^N(t)=[J^N (X^N(t))+I^N(X^N(t))]dt+dZ^N(t), \\
X^N(0)=x^N
\end{cases}
\end{equation}
The infinitesimal generator of \eqref{e:AppEqu} (\cite{ARW00}, \cite{XZ09}) is
\begin{align*}
\mathcal{L}_N&=\sum_{i \in \Gamma_N} \p^{\alpha}_i+
\sum_{i \in \Gamma_N} \left[J_i(x^N_i)+I^N_i(x^N)\right] \p_i,
\end{align*}
it is easy to see that
\begin{equation} \label{e:ComLNPi}
[\p_k, \mcl L_N]=\left(\p_k J_k(x^N_k)\right) \p_k+
\sum_{i \in \Gamma_N} \left(\p_k I^N_i(x^N)\right) \p_i.
\end{equation}
\ \\
\indent The following proposition is important for proving the main theorems. (3) is the key estimates for obtaining the limiting semigroup of \eqref{e:IntSys}, while (2) plays the crucial role in proving the ergodicity.
\begin{prop} \label{p:AppExiDif} Let $I_i, J_i$ satisfy Assumption \ref{a:IJ}, together with \eqref{e:Aij} and \eqref{e:J=0}, then
\begin{enumerate}
\item \eqref{e:AppEqu} has a unique mild solution $X^N(t)$ in the sense that for each $i \in \Gamma_N$,
\begin{equation*}
X_i(t)=\mcl E_i(0,t) x_i+\int_0^t
\mcl E_i(s,t) I^N_i(X^N(s)) ds+\int_0^t \mcl E_i(s,t) d Z_i(s),
\end{equation*}
where $\mcl E_i(s,t)=\exp \{\int_s^t \frac{J_i(X^N_i(r))}{X^N_i(r)}dr\}$ with $\frac{J_i(0)}{0}:=J^{'}_i(0)$.
\item For all $x \in B_{R, \rho}$, if $c>\eta$ with $c,\eta$ defined in
(3) of Assumption \ref{a:IJ}, we have
$$\E_x [|X^N_{i}(t)|] \leq C(\rho,R,d,\eta,c) (1+|i|^\rho).$$
\item For all $x \in B_{R, \rho}$, we have
$$\E_x[|X^N_{i}(t)|] \leq C(\rho,R,d)(1+|i|^{\rho})(1+t)e^{(1+\eta)t}.$$
\item For any $f \in C_b^2(\R^{\Gamma_N},\R)$, define $P^N_t f(x)=\E_x[f(X^N(t))],$
we have $P_t^N f(x) \in C^2_b(\R^{\Gamma_N},\R)$.
\end{enumerate}
\end{prop}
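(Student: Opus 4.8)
The plan is to treat \eqref{e:AppEqu} as a finite-dimensional SDE on $\R^{\Gamma_N}$ with additive $\alpha$-stable noise, and to exploit the dissipativity $\frac{d}{dx}J_i\le 0$ together with the Lipschitz/decay structure of the interaction throughout. For part (1) I would first observe that the drift $J^N+I^N$ is locally Lipschitz (each $J_i$ is $C^1$ and each $I_i$ is globally Lipschitz by Assumption \ref{a:IJ}) and one-sided dissipative, so the standard theory of L\'evy-driven SDEs produces a unique non-exploding strong solution $X^N(t)$. To obtain the mild representation I would write $J_i(x)=\frac{J_i(x)}{x}\,x$ and use the random finite-variation integrating factor $\eiot$; applying the integration-by-parts/It\^o product rule to $\eiot^{-1}X_i^N(t)$ — where, because $\eiot$ has finite variation, no jump-covariation term with $Z_i$ is created — yields exactly the variation-of-constants formula in (1), the convention $\frac{J_i(0)}{0}:=J_i'(0)$ keeping the coefficient continuous.

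Part (2) is the heart of the proposition. Using the mild formula I would estimate $u_i(t):=\E_x|X_i^N(t)|$ term by term. Since $J_i(0)=0$ and $-J_i'\ge c$, the mean value theorem gives $\frac{J_i(x)}{x}\le -c$, hence $\eist\le e^{-c(t-s)}$ pathwise; this bounds the initial term by $e^{-ct}|x_i|$. For the stochastic term I would rerun the integration-by-parts argument of Proposition \ref{p:ErgCriGen} coordinatewise — it uses only $\frac{J_i}{X_i}\le -c$ pathwise and the $\alpha$-stable scaling \eqref{e:AlpStaPro}, both valid here with $\e=c$ — to obtain a bound $C(\alpha,c)$ uniform in $i,N,t$. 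The interaction term is controlled by $|I_i^N(x)|\le\sum_j a_{ji}|x_j|$ (using $I_i(0)=0$). Collecting these gives the linear integral inequality $u_i(t)\le e^{-ct}|x_i|+C(\alpha,c)+\int_0^t e^{-c(t-s)}\sum_j a_{ji}u_j(s)\,ds$. Knowing from part (3) that each $u_i(t)$ is finite, I would then close this uniformly in $t$ by a comparison/supersolution argument in the weighted class $\{v_i=O(1+|i|^\rho)\}$: the key analytic fact is that the resolvent of the integral operator is bounded on this class precisely because $c>\eta$, with both the row- and column-sum bounds $\sum_j a_{ij}\le\eta$, $\sum_i a_{ij}\le\eta$ and the exponential decay $a_{ij}\le e^{-|i-j|}$ ensuring that the interaction does not spread mass and only produces corrections of order $|i|^{\rho-1}$, which the strict inequality $c>\eta$ absorbs. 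This uniform closure — carried out simultaneously over the infinitely many sites $i$ and uniformly in $t$ and $N$ — is the main obstacle.

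For part (3) I would run the same scheme but retain only $\eist\le1$ (true since the exponent is nonpositive). Then $\E|\int_0^t\eist\,dZ_i(s)|\le\E|Z_i(t)|+\E\sup_{s\le t}|Z_i(t)-Z_i(s)|\cdot\int_0^t|d\eist|\le C(1+t)$, using $\int_0^t|d\eist|=1-\eiot\le1$ and the $\alpha$-stable scaling $\E\sup_{s\le t}|Z_i(s)|\le Ct^{1/\alpha}\le C(1+t)$. The inequality $u_i(t)\le|x_i|+C(1+t)+\int_0^t\sum_j a_{ji}u_j(s)\,ds$ is now a genuine Gronwall inequality; iterating it produces the series $\sum_n\frac{t^n}{n!}\big((a^T)^n\text{-source}\big)_i$, whose weighted sums I would bound using the path/column-sum estimates behind Lemma \ref{l:EstA} to get a leading rate $(1+\eta)$, yielding the stated $C(\rho,R,d)(1+|i|^\rho)(1+t)e^{(1+\eta)t}$. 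No smallness of $\eta$ relative to $c$ is needed here, at the price of the exponential time factor.

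For part (4), with $N$ fixed and the noise additive, I would establish $C^2$-dependence on the initial datum $x$. Differentiating \eqref{e:AppEqu} in $x_k$ removes the $dZ$ term, leaving for $\xi_i^{(k)}(t):=\p_k X_i^N(t)$ the linear random ODE $\frac{d}{dt}\xi_i^{(k)}=J_i'(X_i^N)\xi_i^{(k)}+\sum_{l\in\Gamma_N}(\p_l I_i^N)(X^N)\xi_l^{(k)}$, $\xi_i^{(k)}(0)=\delta_{ik}$, and an analogous inhomogeneous linear ODE for the second derivatives. Using $J_i'\le0$ and the Lipschitz bound on $\p_l I_i^N$, a Gronwall estimate gives finite $\E|\xi_i^{(k)}(t)|$ and $\E|\p^2_{x_kx_l}X_i^N(t)|$, which justifies differentiating under the expectation; since $f\in C_b^2$ has only finitely many non-vanishing derivatives over the finite set $\Gamma_N$, the formulas $\p_k P_t^N f=\E[\sum_i(\p_i f)(X^N(t))\,\xi_i^{(k)}(t)]$ and its second-order analogue are bounded, giving $P_t^N f\in C_b^2(\R^{\Gamma_N},\R)$.
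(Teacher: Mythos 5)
Your proposal is sound and reaches the same estimates through the same backbone: the mild/variation-of-constants representation with the random integrating factor $\mcl E_i(s,t)$, the pathwise bound $\mcl E_i(s,t)\le e^{-c(t-s)}$ (resp. $\le 1$), the uniform-in-$t$ control of the stochastic convolution via the integration-by-parts argument of Proposition \ref{p:ErgCriGen}, and the resulting linear renewal inequality for $u_i(t)=\E_x|X_i^N(t)|$, with the dichotomy $c>\eta$ (uniform in $t$) versus $c\ge 0$ (exponential factor $e^{(1+\eta)t}$). Where you genuinely diverge is in how the renewal inequality is closed in part (2) and in parts (1) and (4). The paper iterates the inequality infinitely many times, obtains the Neumann series $\sum_n c^{-n}\sum_j (a^n)_{ji}|x_j|$, and estimates it by splitting $j$ into a cube $\Lambda(i,n)$ of radius $n^2$ about $i$ and its complement, using the path-counting bound of Lemma \ref{l:EstA} outside and crude volume bounds inside; you instead propose a supersolution/comparison argument in the weighted class $v_i=O(1+|i|^{\rho})$, using finiteness from part (3) to avoid circularity. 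This is legitimate and arguably cleaner, but the one step you should not gloss over is the weighted contraction estimate: with the weight $1+|i|^{\rho}$ and the crude bound $|j|^{\rho}\le 2^{\rho}(|j-i|^{\rho}+|i|^{\rho})$ you only get $\sum_j a_{ji}(1+|j|^{\rho})\le 2^{\rho}\eta\,|i|^{\rho}+C$, which need not be beaten by $c>\eta$; you must either take a weight such as $(K+|i|)^{\rho}$ with $K$ large (so that $\sum_j a_{ji}w_j\le(\eta+\epsilon_K)w_i$ with $\epsilon_K\to0$, exploiting \eqref{e:Aij}), or perform the near/far splitting the paper carries out in \eqref{e:OutLam}--\eqref{e:InLam}. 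Your intuition that the correction is of order $|i|^{\rho-1}$ is exactly the right reason this works, but it is the place where the real work sits. For part (1) the paper's appendix proceeds by tightness in the Skorohod topology plus Skorohod embedding to produce a weak solution and then pathwise uniqueness, whereas you invoke standard strong well-posedness for locally Lipschitz, one-sidedly dissipative L\'evy-driven SDEs plus a finite-variation product rule; both are acceptable, yours being shorter at the cost of citing more. For part (4) the paper simply cites Bichteler's differentiability-of-flows results, while your direct first/second variation argument is a self-contained substitute; note that (like the paper's citation) it implicitly needs $I_i$ and $J_i$ to be $C^2$, which Assumption \ref{a:IJ} does not literally state, so this is a shared, not a new, gap.
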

\begin{proof}
To show (1), we first formally write down the mild solution as in (1), then apply the classical Picard iteration (\cite{Bi02}, Section 5.3). We can also prove (1) by the method as in the appendix. \\

For the notational simplicity,
we shall drop the index $N$ of the quantities if no confusions arise.
By (1), we have
\begin{equation} \label{e:XiMilSol}
X_i(t)=\mcl E_i(0,t) x_i+\int_0^t
\mcl E_i(s,t) I_i(X^N(s)) ds+\int_0^t \mcl E_i(s,t) d Z_i(s).
\end{equation}
By (1) of Assumption \ref{a:IJ} (w.l.o.g. we assume $I_i(0)=0$ for all $i$),
\begin{equation} \label{e:AbsValXi}
\begin{split}
|X_i(t)|
& \leq \sum_{j \in \Gamma_N} \delta_{ji}\left(|x_j|+\left|\int_0^t
\mcl E_j(s,t) d Z_j(s)\right|\right)\\
& \ \ \ \ +\int_0^t e^{-c(t-s)} \sum_{j \in \Gamma_N} a_{ji} |X_j(s)| ds.
\end{split}
\end{equation}
We shall iterate the the above inequality in two ways, i.e. the following Way 1 and Way 2, which are the methods to show (2) and (3) respectively.
 The first way is under the condition $c>\eta$, which is crucial for obtaining a upper bound of $\E |X_i(t)|$ uniformly for $t \in [0,\infty)$, while the second one is without any restriction, i.e. $c \geq 0$, but one has to pay a price of an exponential growth in $t$.   \\

\noindent \underline{Way 1: The case of $c>\eta$.} By the definition of $c, \eta$ in (3) of Assumption \ref{a:IJ}, \eqref{e:AbsValXi} and Proposition \ref{p:ErgCriGen},
\begin{equation} \label{e:ExpXi}
\begin{split}
\E |X_i(t)|
& \leq \sum_{j \in \Z^d} \delta_{ji} (|x_j|+C(c))+\int_0^t e^{-c(t-s)}\sum_{j \in \Z^d} a_{ji} \E |X_j(s)| ds.
\end{split}
\end{equation}
Iterating \eqref{e:ExpXi} once, one has
\begin{equation} \label{e:IteTwo}
\begin{split}
 \E |X_i(t)| & \leq \sum_{j \in \Z^d} \delta_{ji} (|x_j|+C(c))+\sum_{j \in \Z^d} \frac{a_{ji}}{c} (|x_j|+C(c))\\
& \ \ +\int_0^t e^{-c(t-s)} \int_0^s e^{-c(s-r)} \sum_{j \in \Z^d} (a^2)_{ji} \E |X_j(r)| drds,
\end{split}
\end{equation}
where $C(c)>0$ is some constant only depending on $c$ and $\alpha$ (but we omit $\alpha$ since it does not play any crucial role here).
Iterating \eqref{e:ExpXi} infinitely many times, we have
\begin{equation} \label{e:IteEXi}
\begin{split}
\E |X_i(t)| & \leq \sum_{n=0}^M \frac{1}{c^n} \sum_{j \in \Z^d} (a^n)_{ji} (|x_j|+C(c))+R_M\\
& \leq \sum_{n=0}^\infty \frac{1}{c^n} \sum_{j \in \Z^d} (a^n)_{ji} |x_j|+\frac{C(c)}{1-\eta/c}
\end{split}
\end{equation}
where $R_M$ is an $M$-tuple integral (see the double integral in \eqref{e:IteTwo}) and $\lim_{M \rightarrow \infty} R_M=0$.
 To estimate the double summation in the last line, we split the sum '$\sum_{j \in \Z^d} \cdots$' into two pieces, and control them by \eqref{e:EstDel+a} and $\frac{1}{c^n}$ respectively. More precisely, let $\Lambda(i,n) \subset \Z^d$ be a cube centered at $i$ such that $dist(i,\Lambda^c(i,n))=n^2$ (up to some $O(1)$ correction), one has
\begin{equation} \label{e:SumBou}
\begin{split}
 \sum_{n=1}^{\infty} \frac{1}{c^n} \sum_{j \in \Z^d} (a^n)_{ji} |x_j|=\sum_{n=1}^{\infty} \frac{1}{c^n} \left(\sum_{j \in \Lambda(i,n)}+\sum_{j \in \Lambda^{c}(i,n)}\right)(a^n)_{ji} |x_j|.
\end{split}
\end{equation}
Since $x \in B_{R, \rho}$, we have by \eqref{e:EstDel+a} with $c=0$ therein
\begin{equation} \label{e:OutLam}
\begin{split}
& \ \ \sum_{n=0}^{\infty}\frac{1}{c^n} \sum_{j \in \Lambda^{c}(i,n)} (a^n)_{ji} |x_j| \\
& \leq R \sum_{n=0}^{\infty}\frac{1}{c^n} \sum_{j \in \Lambda^{c}(i,n)} (a^n)_{ji} (|j|^\rho+1) \\
 & \leq C(R,\rho) \sum_{n=0}^{\infty}\frac{1}{c^n} \sum_{j \in \Lambda^{c}(i,n)} (a^n)_{ji}  (|j-i|^\rho+|i|^\rho+1) \\
& \leq C(R,\rho) \sum_{n=0}^{\infty} \frac{\eta^n}{c^n} \sum_{j \in \Lambda^{c}(i,n)}  \sum_{k \geq |j-i|} (2k)^{nd} e^{-\frac12 k} e^{-\frac12 k} \left(|j-i|^{\rho}+|i|^{\rho}+1\right) \\
& \leq C(R,\rho) \sum_{n=1}^{\infty} \frac{\eta^n}{c^n} \sum_{k \geq n^2} (2k)^{nd} e^{-\frac12 k} \sum_{j \in \Lambda^{c}(i,n)} e^{-\frac12 |j-i|} \left(|j-i|^{\rho}+|i|^{\rho}+1\right) \\
& \leq C(\rho, R, d)(1+|i|^{\rho})
\end{split}
\end{equation}
where the last inequality is by the fact $\sum_{k \geq n^2} (2k)^{nd} e^{-\frac12 k} \leq \sum_{k \geq 1} e^{-\frac12 k+nd \log (2k)}<\infty$ and the fact $\sum_{j \in \Lambda^{c}(i,n)} e^{-\frac12 |j-i|} |j-i|^\rho \leq \sum_{j \in \Z^d} e^{-\frac12 |j-i|} |j-i|^\rho<\infty$. For the other piece, one has
\begin{equation} \label{e:InLam}
\begin{split}
& \ \ \sum_{n=0}^{\infty} \frac{1}{c^n} \sum_{j \in \Lambda(i,n)} (a^n)_{ji}|x_j|  \\
& \leq C(R,\rho) \sum_{n=0}^{\infty} \frac{1}{c^n} \sum_{j \in \Lambda(i,n)} (a^n)_{ji}\left( |j-i|^\rho+|i|^\rho+1 \right) \\
& \leq C(R,\rho) \sum_{n=0}^{\infty} \frac{\eta^n}{c^n} |\Lambda(i,n)| \left(n^{2\rho}+|i|^{\rho}+1\right) \\
& \leq C(\rho, R) \sum_{n=0}^{\infty} \frac{\eta^n}{c^n} n^{2d}
\left(n^{2\rho}+|i|^{\rho}+1\right) \\
& \leq C(R, \rho, \eta, c) (1+|i|^\rho). \\
\end{split}
\end{equation}
Collecting \eqref{e:IteEXi}, \eqref{e:OutLam} and \eqref{e:InLam}, we immediately obtain (2).
\\

\noindent  \underline{Way 2: The general case of $c \geq 0$.} By the integration by parts,
Doob's martingale inequality and the easy relation $d \mcl E_j(s,t)=\mcl E_j(s,t) [-L_j(X(s))]ds$ where $L_j(x)=\frac{J_j(x)}{x}$, we have
\begin{equation} \label{e:IntByParJ}
\begin{split}
& \ \E \left |\int_0^t \mcl E_j(s,t) dZ_j(s)\right| \\
& \leq \E|Z_j(t)|+\E \left |\int_0^t \mcl E_j(s,t) L_j(X(s))Z_j(s) ds\right| \\
& \leq C t^{1/\alpha}+\E \left[\sup_{0 \leq s \leq t} |Z_j(s)|
\left|\int_0^t \mcl E_j(s,t) (-L_j(X(s)))ds\right| \right] \\
& \leq Ct^{1/\alpha}+ \E\sup_{0 \leq s \leq t} |Z_j(s)| \\
& \leq C t^{1/\alpha}.
\end{split}
\end{equation}
By \eqref{e:AbsValXi} and \eqref{e:IntByParJ}, one has
\begin{equation} \label{e:Way2}
\begin{split}
\E |X_i(t)|
& \leq \sum_{j \in \Z^d} \delta_{ji} (|x_j|+Ct^{\frac 1 \alpha})+\int_0^t \sum_{j \in \Z^d} (\delta+a)_{ji} \E |X_j(s)| ds
\end{split}
\end{equation}
Iterating the above inequality infinitely many times,
\begin{equation}
\begin{split}
\E |X_i(t)| \leq \sum_{n=0}^\infty \frac{t^n}{n!} \sum_{j \in \Z^d} [(\delta+a)^n]_{ji} |x_j|+Ce^{(1+\eta)t} t^{\frac 1 \alpha},
\end{split}
\end{equation}
By estimating the double summation in the last line by the same method as in Way 1, we finally obtain (3).
\ \\

(4) immediately follows from Proposition 5.6.10 and Corollary 5.6.11 in \cite{Bi02}.
\end{proof}
\subsection{Finite speed of propagation of information property}
The following relation \eqref{e:expfinite} is usually called finite speed of propagation of information property (\cite{GuZe03}),
which roughly means that the effects of the initial condition (i.e. $f$ in our case) need a long time
to be propagated (by interactions) far away. The main reason for this phenomenon is that
the interactions are finite range or sufficiently weak at long range. \\

 From the view point of PDEs, \eqref{e:expfinite} implies equicontinuity of $P^N_tf(x)$ under product topology on any $B_{\rho,R}$, combining this with the fact that $P_t^Nf(x)$ are uniformly bounded, we can find some subsequence $P^{N_k}_t f(x)$ uniformly converge to a limit $P_t f(x)$ on $B_{\rho,R}$ by Ascoli-Arzela Theorem (notice that $B_{\rho,R}$ is compact under product topology). This is also another motivation of establishing the estimates \eqref{e:expfinite}.
\begin{lem}  \label{l:FinSpePro}
\ \\ \
\noindent 1. For any $f \in \D^2$, we have
\begin{equation} \label{e:GraNorEst}
\sum_{k \in \Z^d} ||\p_k P^N_tf||^2 \leq e^{2\eta t} |||f|||^2.
\end{equation}
and
\begin{equation} \label{e:GraNorEst1}
|||P^N_t f||| \leq C(I,t)|||f|||.
\end{equation}
where $C(I,t)>0$, depending on the interaction $I$ and $t$, is an increasing function of $t$. \\
2. \emph{(Finite speed of propagation of information property)} Given any $f \in \D^2$ and $k \notin \Lambda(f)$, for any $0<A \leq 1/4$, there exists
some $B \geq 8$ such that when $n_k>Bt$, we have
\begin{equation} \label{e:expfinite}
||\partial_k P^N_t f||^2\leq 2e^{-At-An_k}|||f|||^2
\end{equation}
where $n_k=[\sqrt{dist(k,\Lambda(f))}]$.
\end{lem}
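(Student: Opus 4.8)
The plan is to reduce all three estimates to a single scalar differential inequality for the quantities $g_k(t):=\|\p_k P^N_t f\|$. Writing $u_k(t,x)=\p_k P^N_t f(x)$ and differentiating the Kolmogorov equation $\p_t P^N_t f=\mcl L_N P^N_t f$ with the help of the commutator identity \eqref{e:ComLNPi}, one obtains the linear system
\[
\p_t u_k=\mcl L_N u_k+\big(\p_k J_k(x_k)\big)u_k+\sum_{i\in\Gamma_N}\big(\p_k I^N_i(x)\big)u_i .
\]
At a point where $x\mapsto u_k(t,x)$ attains its maximum every first derivative vanishes and each $\p^\alpha_i u_k\le 0$, so $\mcl L_N u_k\le 0$ there; since $\p_k J_k\le 0$ by (2) of Assumption \ref{a:IJ} and $|\p_k I^N_i|\le a_{ki}$ by the Lipschitz bound in (1), differentiating the maximum (and, symmetrically, the minimum) yields
\[
\tfrac{d}{dt}g_k(t)\le\sum_{i}a_{ki}\,g_i(t),\qquad g_k(0)=\|\p_k f\| .
\]
Only the sign $\p_k J_k\le0$ is used here, not the strict confinement $c>\eta$; the regularity required to run this maximum principle is supplied by (4) of Proposition \ref{p:AppExiDif}.

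Comparing with the associated linear system gives, entrywise, $g_k(t)\le\sum_{n\ge0}\frac{t^n}{n!}\sum_i(a^n)_{ki}\,g_i(0)$. Summing over $k$ and using $\sum_k(a^n)_{ki}\le\eta^n$, which follows by iterating $\sup_j\sum_i a_{ij}\le\eta$ from (3) of Assumption \ref{a:IJ}, produces $\sum_k g_k(t)\le e^{\eta t}\,|||f|||$; this is \eqref{e:GraNorEst1} with $C(I,t)=e^{\eta t}$. Since the $g_k$ are non-negative, $\sum_k g_k(t)^2\le\big(\sum_k g_k(t)\big)^2\le e^{2\eta t}|||f|||^2$, which is \eqref{e:GraNorEst}.

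For \eqref{e:expfinite} fix $k\notin\Lambda(f)$, put $D:=\mathrm{dist}(k,\Lambda(f))$ and $n_k=[\sqrt D]$. Because $g_i(0)=0$ off $\Lambda(f)$ the $n=0$ term drops and, using $|k-i|\ge D$ for $i\in\Lambda(f)$, one has $g_k(t)\le\sum_{n\ge1}\frac{t^n}{n!}\sum_{i\in\Lambda(f)}(a^n)_{ki}\,g_i(0)$. I split this sum at $n=n_k$. In the low range $1\le n\le n_k$ I invoke \eqref{e:EstDel+a} with $c=0$: since $n\le\sqrt D$ forces $nd\ll D$, the spatial tail sits far beyond the peak $m\approx nd$ of $(2m)^{nd}e^{-m}$, so $\sum_{m\ge D}(2m)^{nd}e^{-m}\le C(2D)^{nd}e^{-D}$, and the resulting partial series $C\,e^{-D}|||f|||\sum_{n\le n_k}\frac{(\eta t(2D)^d)^n}{n!}$ is overwhelmed by the factor $e^{-D}=e^{-n_k^2}$, giving a bound far smaller than $e^{-An_k}|||f|||$. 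In the high range $n>n_k$ I use the crude entrywise bound $(a^n)_{ki}\le\eta^n$, which leaves the exponential tail $|||f|||\sum_{n>n_k}\frac{(\eta t)^n}{n!}$; choosing $B$ large (depending on $\eta,A$) so that $n_k>Bt$ makes this tail $\le e^{-An_k}|||f|||$.

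Adding the two ranges bounds $g_k(t)$ by a constant multiple of $e^{-An_k}|||f|||$, and since $n_k>Bt\ge 8t$ one may rewrite $e^{-An_k}$ as $e^{-\frac12 At}e^{-\frac12 An_k}$ up to a factor $\le1$; squaring and fixing $A\le\tfrac14$, $B\ge8$ delivers \eqref{e:expfinite} with the stated constant $2$. The main obstacle is the low-range estimate: one must use the precise scaling $n_k=[\sqrt D]$ to guarantee that for every $n\le n_k$ the spatial tail lies in the regime $D>2nd$ where $\sum_{m\ge D}(2m)^{nd}e^{-m}\le C(2D)^{nd}e^{-D}$, and then control the factorially weighted partial sum by its dominant term so that the decay $e^{-D}$ beats the accumulated polynomial factors; it is essential to work with the partial sum and not the full exponential $e^{\eta t(2D)^d}$, which is too large. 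Balancing the constants so that both ranges simultaneously yield the common rate $e^{-An_k}$, and in particular choosing $B$ large enough to absorb the small-$n_k$ cases where $t<n_k/B$ is correspondingly tiny, is the delicate point; the high-range tail is routine once $n_k>Bt$.
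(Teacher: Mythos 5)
Your argument is correct in substance and reaches the same final mechanism as the paper — an iterated series bound of the form $\sum_{n}\frac{t^n}{n!}\,(\text{matrix power})_{ki}$ acting on the initial gradient data, split at $n=n_k$, with Lemma \ref{l:EstA} controlling the low range and the factorial controlling the high range — but you derive the basic recursive inequality by a genuinely different route. The paper never touches a maximum principle: it differentiates $P_{t-s}(\p_k P_sf)^2$, uses the positivity $\mcl L_N F^2-2F\mcl L_NF\ge 0$ together with the commutator \eqref{e:ComLNPi} and the sign $\p_kJ_k\le 0$, and then applies the arithmetic–geometric mean inequality; this produces a closed inequality for the \emph{squared} gradients with the matrix $(a+\eta\delta)^n$ acting on $\|\p_if\|^2$ (the $\eta\delta$ being the price of the AM--GM step). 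You instead run a comparison/maximum-principle argument directly on $u_k=\p_kP^N_tf$, obtaining $\frac{d}{dt}\|\p_kP^N_tf\|\le\sum_i a_{ki}\|\p_iP^N_tf\|$ with the matrix $a$ alone and first powers of the sup norms. What your version buys is a cleaner proof of \eqref{e:GraNorEst1}: you get $\sum_k\|\p_kP^N_tf\|\le e^{\eta t}|||f|||$ in one line, whereas the paper extracts \eqref{e:GraNorEst1} from the squared estimate \eqref{e:PkPtfEst} rather tersely; and deducing \eqref{e:GraNorEst} from it via $\sum_kg_k^2\le(\sum_kg_k)^2$ is immediate. What the paper's version buys is rigor at the one point where your argument is only formal: on the non-compact space $\R^{\Gamma_N}$ the supremum of $u_k(t,\cdot)$ need not be attained, and differentiating $t\mapsto\sup_x u_k(t,x)$ (and asserting $\mcl L_Nu_k\le 0$ at the extremum) requires justification; the interpolation $P_{t-s}(\cdot)$ together with $\|P_{t-s}g\|\le\|g\|$ sidesteps all of this using only the $C^2_b$ regularity from (4) of Proposition \ref{p:AppExiDif}. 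Your treatment of part 2 is structurally identical to the paper's, and you correctly identify the two delicate points (using the partial sum rather than the full exponential $e^{\eta t(2D)^d}$ in the low range, and letting $e^{-D}=e^{-n_k^2}$ beat the accumulated factor $e^{O(n_k\log n_k)}$); like the paper, you should acknowledge that the leading constant $2$ in \eqref{e:expfinite} really absorbs a $\Lambda(f)$-dependent constant only for $n_k$ large, but this is a shared cosmetic issue, not a gap specific to your proof.
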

\begin{proof}
For the notational simplicity, we shall drop the parameter $N$ of $P^N_t$ in the proof.
By the fact $\lim_{t \rightarrow 0+}\frac{P_tF^{2}-F^{2}}{t} \geq \lim_{t \rightarrow 0+}\frac{(P_tF)^{2}-F^{2}}{t}$, one has $\mathcal{L}_N F^{2}-2 F
\mathcal{L}_N F \geq 0.$ Hence,
for any $f \in \D^2$, by \eqref{e:ComLNPi} and the fact $\p_k J_k \leq 0$, we have the
following calculation
\begin{equation} \label{e:ForBacSem}
\begin{split}
\frac{d}{ds}P_{t-s} (\p_k P_{s}f)^{2}&=-P_{t-s} \left[\mathcal{L}_N (\p_k
P_{s} f)^{2}-2  (\p_k P_{s} f)
 \p_k (\mathcal{L}_N P_{s} f) \right] \\
&=-P_{t-s}\left[\mathcal{L}_N (\p_k
P_{s} f)^{2}-2(\p_k P_{s} f)
 \mathcal{L}_N (\p_k P_{s} f) \right] \\
&\ \ \ +2 P_{t-s} \left((\p_k P_{s} f)
[\p_k,\mathcal{L}_N] P_{s} f \right) \\
& \leq 2P_{t-s}\left((\p_k P_{s} f)
[\p_k,\mathcal{L}_N] P_{s} f \right) \\
&=2P_{t-s}\left((\p_k P_{s} f)
\sum_{i \in \Gamma_N} (\p_k I_i) \p_i P_{s} f \right) \\
& \ \ +2P_{t-s}\left((\p_k P_{s} f) (\p_k J_k) \p_k P_{s} f \right) \\
& \leq 2 P_{t-s}\left((\p_k P_{s} f)
\sum_{i \in \Gamma_N} (\p_k I_i) \p_i P_{s} f \right). \\
\end{split}
\end{equation}
Moreover, by the above inequality, Assumption \ref{a:IJ}, and the inequality of arithmetic and geometric means in order,
\begin{equation*}
\begin{split}
|\p_k P_{t} f|^{2} & \leq ||\p_k f||^{2}+2 \int_0^t P_{t-s} \left(|\p_k P_{s} f|
\sum_{i \in \Gamma_N} |\p_k I_i| |\p_i P_{s} f| \right) ds \\
& \leq ||\p_k f||^{2}+\eta \int_0^t P_{t-s} (|\p_k P_{s} f|^{2})ds+\int_0^t P_{t-s} \left(
\sum_{i \in \Gamma_N}  a_{ki} |\p_i P_{s} f|^2 \right) ds \\
&\leq ||\p_k f||^{2}+ \int_0^t P_{t-s} \left(\sum_{i \in \Z^d} (a_{ki} +\eta \delta_{ki}) |\p_i P_sf|^2 \right)ds.
\end{split}
\end{equation*}
where $\eta$ is defined in (3) of Assumption \ref{a:IJ}.
Iterating the above inequality, we have
\begin{equation*}
\begin{split}
|\p_k P_{t} f|^{2}
& \leq ||\p_k f||^{2}+t \sum_{i \in \Z^d} (a_{ki}+\eta \delta_{ki}) ||\p_i f||^2 \\
& \ \ +\int_0^t P_{t-s_1}\int_0^{s_1} P_{s_1-s_2} \sum_{i \in \Z^d} [(a+\eta \delta)^2]_{ki} |\p_i P_{s_2} f|^2 ds_2 ds_1 \\
& \leq \cdots \cdots  \leq \sum_{n=0}^{N} \frac{t^n}{n!} \sum_{i \in \Z^d} [(a+\eta \delta)^n]_{ki} ||\p_i f||^2+Re(N) \\
\end{split}
\end{equation*}
where $Re(N) \rightarrow 0$ as $N \rightarrow \infty$. Hence,
\begin{equation} \label{e:PkPtfEst}
||\p_k P_{t} f||^{2} \leq \sum_{n=0}^{\infty} \frac{t^n}{n!} \sum_{i \in \Z^d} [(a+\eta \delta)^n]_{ki} ||\p_i f||^2.
\end{equation}
Summing $k$ over $\Z^d$ in the above inequality, one has
\begin{equation*}
\begin{split}
\sum_{k \in \Z^d} ||\p_k P_t f||^2 & \leq \sum_{k \in \Z^d} \sum_{n=0}^{\infty} \frac{t^n}{n!} \sum_{i \in \Z^d} [(a+\eta \delta)^n]_{ki} ||\p_i f||^2 \\
&\leq \sum_{n=0}^{\infty} \frac{t^n}{n!} \sup_i \sum_{k \in \Z^d} [(a+\eta \delta)^n]_{ki} \sum_{i \in \Z^d} ||\p_i f||^2 \\
& \leq e^{2\eta t} \sum_{i \in \Z^d} ||\p_i f||^2 \leq e^{2\eta t} |||f|||^2
\end{split}
\end{equation*}
As for \eqref{e:GraNorEst1}, one can also easily obtain from \eqref{e:PkPtfEst} that
$\sum_{k \in \Z^d} ||\p_k P^N_t f|| \leq C(I,t) \sqrt{\sum_{i \in \Z^d} ||\p_k f||^2}
\leq C(I,t) |||f|||$ and that $C(I,t)>0$ is an increasing function related to $t$.
\  \\

In order to prove 2, one needs to estimate the double sum of \eqref{e:PkPtfEst} in a more delicate way. We shall split the sum '$\sum_{n=0}^{\infty}$'
into two pieces '$\sum_{n=0}^{n_k}$' and '$\sum_{n=n_k}^{\infty}$' with $n_k=[\sqrt{dist(k,\Lambda(f))}]$, and control them
by \eqref{e:EstDel+a} and some basic calculation respectively. More precisely, for the piece
'$\sum_{n=0}^{n_k}$', by \eqref{e:EstDel+a} and the definition of $n_k=[\sqrt{dist(k,\Lambda(f))}]$, we have
\begin{equation*}
\begin{split}
& \ \ \ \sum_{n=0}^{n_k} \frac{t^n}{n!} \sum_{i \in \Z^d} [(a+\eta \delta)^n]_{ki} ||\p_i f||^2 \\
& \leq \sum_{n=0}^{n_k} \frac{t^n}{n!} \sum_{i \in \Lambda(f)} \sum_{j \geq |k-i|} (2\eta)^n 2^{nd} (j+\Lambda(f))^{dn} e^{-j} ||\p_i f||^2 \\
& \leq e^t \sum_{i \in \Lambda(f)} \sum_{j \geq |k-i|} \exp \left \{dn_k \log[2(2\eta)^{1/d}(j+\Lambda(f))]-\frac14 n^2_k-\frac j4 \right \} e^{-\frac j2 } ||\p_i f||^2 \\
& \leq C(d, \Lambda(f), \eta) e^t \sum_{i \in \Lambda(f)} \sum_{j \geq n^2_k} e^{-\frac j2} ||\p_i f||^2 \\
& \leq C(d,\Lambda(f),\eta) e^t e^{-\frac12 n^2_k} |||f|||^2.
\end{split}
\end{equation*}
For the other piece, it is easy to see
\begin{equation*}
\begin{split}
& \ \ \sum_{n \geq n_k}\frac{t^n}{n!} \sum_{i \in \Z^d} [(a+\eta \delta)^n]_{ki} ||\p_i f||^2 \\
&=\sum_{n \geq n_k}\frac{t^n}{n!} \sum_{i \in \Lambda(f)} [(a+\eta \delta)^n]_{ki} ||\p_i f||^2
\leq \frac{t^{n_k}}{n_k!} e^{2\eta t} |||f|||^2.
\end{split}
\end{equation*}
Combining \eqref{e:PkPtfEst} and the above two estimates, we immediately have
$$||\p_k P_tf||^2 \leq \{C e^{t} e^{-\frac{1}2 n^2_k}+\frac{t^{n_k}}{n_k!} e^{2\eta t}\} |||f|||^2.$$
For any $A>0$, choosing $B \geq 1$ such
that
$$2-\log B+\log(2\eta)+\frac{2\eta}{B} \leq -2A,$$
as $n>Bt$, one has
\begin{equation*} 
\begin{split}
& \ \ \frac{t^{n} (2\eta)^{n}}{n!} e^{2\eta t} \leq \exp\{n\log\frac{2\eta}{B}+2n+(2\eta)\frac{n}{B}\} \\
& \leq
\exp\{-2An\} \leq \exp\{-An-At\}.
\end{split}
\end{equation*}
Now take $0<A \leq 1/4$, $B \geq 8$ and $n$ as the above, we can easily check that
$$e^t e^{-\frac 12 n^2} \leq e^{-\frac14 n^2} e^{-\frac 14 n Bt+t} \leq e^{-An-At}.$$
Replacing $n$ by $n_k$, we conclude the proof of \eqref{e:expfinite}.
\end{proof}
\subsection{Proof of Theorem \ref{t:ConDyn}}
 As mentioned in the previous subsection, by \eqref{e:expfinite} and the fact that $P_t^Nf(x)$ are uniformly bounded, we can find some subsequence $P^{N_k}_t f(x)$ uniformly converges to a limit $P_t f(x)$ on $B_{\rho,R}$ by Ascoli-Arzela Theorem. However, this method cannot give more detailed description of $P_t$ such as Markov property. Hence, we need to analyze $P^N_t f$ in a more delicate way.

\begin{proof} [{Proof of Theorem \ref{t:ConDyn}}]
 We shall prove the theorem by the following two steps:
\begin{enumerate}
\item $P_t f(x):=\lim \limits_{N \rightarrow \infty} P^N_tf(x)$ exists pointwisely on $x \in \mathbb B$ for any $f \in \mcl D^2$ and $t>0$.
\item Extending the domain of $P_t$ to $\mathcal B_b(\mathbb B)$ and proving that $P_t$ is Markov on $\mcl B_b(\mathbb B)$.
\end{enumerate}
\ \\
\noindent \underline{\emph{Step 1}:} To prove (1), it suffices to show that $\{P^N_tf(x)\}_N$ is a cauchy sequence for $x \in B_{R,\rho}$ with any fixed $R$ and $\rho$.
 \ \\

Given any $M>N$ such that $\Gamma_M \supset \Gamma_N \supset \Lambda(f)$, we have by a similar calculus as in \eqref{e:ForBacSem}
\begin{equation*} \label{e:ForBacDif}
\begin{split}
& \ \ \frac{d}{ds}P^M_{t-s} \left(P^M_{s}f-P^N_{s}f\right)^2
\\
&=-P^M_{t-s}\left[\mcl L_M \left(P^M_{s}f-P^N_{s}f\right)^2-2\left(P^M_{s}f-P^N_{s}f\right) \mcl L_M
\left(P^M_{s}f-P^N_{s}f\right)\right]  \\
& \ \ +2P^M_{t-s}\left[\left(P^M_{s}f-P^N_{s}f\right)(\mcl L_M-\mcl L_N)P^N_{s}f\right] \\
& \leq 2P^M_{t-s}\left[\left(P^M_{s}f-P^N_{s}f\right)(\mcl L_M-\mcl L_N)P^N_{s}f\right],
\end{split}
\end{equation*}
moreover, by the facts $\Lambda(P^N_s f)=\Gamma_N$, $\Gamma_M \supset \Gamma_N$
and $\Lambda(J_k)=k$, $$(\mcl L_M-\mcl L_N)P^N_{s}f=\sum_{i \in \Gamma_N} \left(I^M_i(x^M)-I^N_i(x^N)\right) \p_i P^N_sf.$$
Therefore, by Markov property of $P^M_t$, the following easy fact (by fundamental theorem of calculus, definition of $I^M$, and (1) of Assumption \ref{a:IJ})
 $$|I^M(x^M)-I^N(x^N)| \leq \sum_{j \in \Gamma_M \setminus \Gamma_N} a_{ji} |x_j|,$$
the assumption \eqref{e:Aij} (i.e. $a_{ij} \leq e^{-|i-j|}$), and (3) of Proposition \ref{p:AppExiDif} in order, we have for any $x \in B_{R,\rho}$,
\begin{equation} \label{e:PMtf-PNtfSqu}
\begin{split}
& \ \ \ \left(P^M_t f(x)-P^N_t f(x)\right)^2 \\
& \leq  2||f||_{\infty} \int_0^t P^M_{t-s} \left(\sum_{ i \in \Gamma_N} \sum_{j \in \Gamma_M \setminus \Gamma_N} a_{ji} |x_j|||\p_i P^N_s f|| \right)(x)ds \\
& \leq 2||f||_{\infty} \sum_{ i \in \Gamma_N} \sum_{j \in \Gamma_M \setminus \Gamma_N} e^{-|i-j|}\int_0^t \E_x[|X^M_j(t-s)|] ||\p_i P^N_s f|| ds \\
& \leq C(t,\rho,R,d) ||f||_{\infty} \sum_{ i \in \Gamma_N} \sum_{j \in \Gamma_M \setminus \Gamma_N} e^{-|i-j|}(|j|^{\rho}+1) \int_0^t ||\p_i P^N_s f|| ds.
\end{split}
\end{equation}
\\
\indent Now let us estimate the double sum in the last line of \eqref{e:PMtf-PNtfSqu}, the idea is to split the first sum '$\sum_{i \in \Gamma_N}$' into two pieces '$\sum_{i \in \Lambda}$' and '$\sum_{\Gamma_N \setminus \Lambda}$', and control them by $e^{-|i-j|}$ and \eqref{e:expfinite} respectively. More precisely, take a cube $\Lambda \supset \Lambda(f)$ (to be determined later)
inside $\Gamma_N$, we have by \eqref{e:GraNorEst1}
\begin{equation*}
\begin{split}
& \ \ \sum_{ i \in \Lambda} \sum_{j \in \Gamma_M \setminus \Gamma_N} e^{-|i-j|}(|j|^{\rho}+1) \int_0^t ||\p_i P^N_s f|| ds \\
& \leq 2^{\rho} \sum_{i \in \Lambda} \sum_{j \in \Gamma_M \setminus \Gamma_N} e^{-|i-j|}(|j-i|^{\rho}+|i|^{\rho}+1) \int_0^t ||\p_i P^N_s f|| ds  \\
& \leq 2^{\rho}  \int_0^t \sum_{i \in \Lambda} ||\p_i P^N_s f|| ds  \sum_{k \geq dist(\Lambda, \Gamma_M \setminus \Gamma_N)} \sum_{j:|j-i|=k} e^{-k}(k^{\rho}+|\Lambda|^{\rho}+1)\\
& \leq 2^{\rho}t C(I,t) \sum_{i \in \Z^d} ||\p_i f|| \sum_{k \geq dist(\Lambda, \Gamma_M \setminus \Gamma_N)} \left(|\Lambda|+k \right)^d e^{-k}(k^{\rho}+|\Lambda|^{\rho}+1) \\
& \leq \epsilon
\end{split}
\end{equation*}
for arbitrary $\epsilon>0$ as long as $\Gamma_N, \Gamma_M$ (which depend on $\Lambda$, the interaction $I$, $t$) are both sufficiently large.
\ \\

For the piece '$\sum_{\Gamma_N \setminus \Lambda}$', one has by \eqref{e:expfinite}
\begin{equation*}
\begin{split}
& \ \ \sum_{ i \in \Gamma_N \setminus \Lambda} \sum_{j \in \Gamma_M \setminus \Gamma_N} e^{-|i-j|}(|j|^{\rho}+1) \int_0^t e^{t-s} ||\p_i P^N_s f|| ds \\
& \leq 2^{\rho} e^t \sum_{i \in \Gamma_N \setminus \Lambda} \sum_{j \in \Gamma_M \setminus \Gamma_N} e^{-|i-j|}(|j-i|^{\rho}+|i|^{\rho}+1) \int_0^t e^{-As-An_i} ds  \\
& \leq C(t,\rho,A)\sum_{i \in \Gamma_N \setminus \Lambda}(1+|i|^{\rho})e^{-A[dist(i, \Lambda(f))]^{1/2}} \\
& \leq \epsilon
\end{split}
\end{equation*}
as we choose $\Lambda$ big enough so that $dist(\Gamma_N \setminus \Lambda, \Lambda(f))$ is sufficiently large. Combing all the above, we immediately conclude step 1. We denote
$$P_t f(x)=\lim_{N \rightarrow \infty} P^N_t f(x).$$
\ \\
\noindent \underline{\emph{Step 2:}} Proving that $P_t$ is a Markov semigroup on $\mcl B_b(\mathbb B)$.
We first extend $P_t$ to be an operator on $\mcl B_b(\mathbb B)$, then prove this new $P_t$ satisfies semigroup and Markov property. \\

It is easy to see from step 1, for any fixed $x \in \mathbb B$, $P_t$ is a linear functional on
$\mcl D^2$. Since $\mathbb B$ is locally compact (under product topology), by Riesz representation
theorem for linear functional (\cite{Fo99}, pp 223), we have a Radon measure on $\mathbb B$, denoted by $P^{*}_t \delta_x$, so that
\begin{equation}
P_t f(x)=P^{*}_t \delta_x (f).
\end{equation}
By (3) of Proposition \ref{p:AppExiDif}, take any $x \in \mathbb B$, it is clear that
the approximate process $X^N(t,x^N) \in \mathbb B$ a.s. for all $t>0$. Hence, for
all $N>0$, we have
$$P^N_t (1_{\mathbb B})(x)=\E[1_{\mathbb B} (X^N(t,x^N))]=1 \ \ \ \forall \ x \in \mathbb B.$$
Let $N \rightarrow \infty$, by step 1 (noticing $1_{\mathbb B} \in \mcl D^2$), we have for all
$x \in \mathbb B$
$$P_t 1_{\mathbb B}(x)=1,$$
which immediately implies that $P^{*}_t \delta_x$ is a probability measure supported on $\mathbb B$.
With the measure $P^{*}_t \delta_x$, one can easily extend the operator $P_t$ from
$\mcl D^2$ to $\mcl B_b(\mathbb B)$ by bounded convergence theorem since $\mcl D^2$ is dense in $\mcl B_b(\mathbb B)$ under product
topology. \\

Now we prove the semigroup property of $P_t$, by bounded convergence theorem and the dense property
of $\mcl D^2$ in $\mcl B_b(\mathbb B)$, it suffices to prove this property
on $\mcl D^2$.  More precisely, for any $f \in \mcl D^2$, we shall prove that for all $x \in \mathbb B$
\begin{equation} \label{e:SemPro}
P_{t_2+t_1} f(x)=P_{t_2}P_{t_1} f(x).
\end{equation}
To this end, it suffices to show \eqref{e:SemPro} for all $x \in B_{R,\rho}$.
\ \\

On the one hand, from the first step, one has
\begin{equation} \label{e:LimPNt2t1Pt2t1}
\lim_{N \rightarrow \infty} P^N_{t_2+t_1} f(x)=P_{t_2+t_1}f(x) \ \ \ \forall \ x \in B_{R,\rho}.
\end{equation}
On the other hand, we have
\begin{equation} \label{e:Pt2Pt1-PNt2PNt1}
\begin{split}
|P_{t_2}P_{t_1}& f(x)-P^N_{t_2}P^N_{t_1} f(x)| \leq |P_{t_2}P_{t_1}f(x)-P_{t_2}P^N_{t_1} f(x)| \\
&+|P^M_{t_2} P^N_{t_1} f(x)-P_{t_2}P^N_{t_1} f(x)|+|P^M_{t_2} P^N_{t_1} f(x)-P^N_{t_2}P^N_{t_1}f(x)|,
\end{split}
\end{equation}
with $M>N$ to be determined later according to $N$.
It is easy to have by step 1 and bounded convergence theorem
\begin{equation} \label{e:Pt2Pt1-Pt2PNt1}
|P_{t_2}P_{t_1}f(x)-P_{t_2}P^N_{t_1} f(x)|=|P_{t_2}^* \delta_x(P_{t_1}f-P^N_{t_1}f)| \rightarrow 0
\end{equation}
as $N \rightarrow \infty$. Moreover, by the first step, one has
\begin{equation} \label{e:LimPMt2Pt2}
|P^M_{t_2} P^N_{t_1} f(x)-P_{t_2} P^N_{t_1}f(x)|< \e
\end{equation}
for arbitrary $\e>0$ as long as $M \in \N$ (depending on $\Lambda_N$) is sufficiently large.
As for the last term on the r.h.s. of \eqref{e:Pt2Pt1-PNt2PNt1}, by the
same arguments as in \eqref{e:PMtf-PNtfSqu} and those immediately
after \eqref{e:PMtf-PNtfSqu}, we have
\begin{equation} \label{e:PMt2Pt1-PNt2PNt1}
\begin{split}
& \ \ \ \left(P^M_{t_2}P^N_{t_1} f(x)-P^N_{t_2}P^N_{t_1} f(x)\right)^2 \\
& \leq C(t_1,t_2,\rho,R,d) ||f||_{\infty} \sum_{ i \in \Gamma_N} \sum_{j \in \Gamma_M \setminus \Gamma_N} e^{-|i-j|}(|j|^{\rho}+1) \int_0^{t_2} ||\p_i P^N_{t_1+s} f|| ds \\
&<\epsilon
\end{split}
\end{equation}
for arbitrary $\e>0$ if $\Gamma_M$ and $\Gamma_N$ are both sufficiently large.
\ \\

Collecting \eqref{e:Pt2Pt1-PNt2PNt1}-\eqref{e:PMt2Pt1-PNt2PNt1}, we have
$$\lim_{N \rightarrow \infty} P^N_{t_2} P^N_{t_1} f(x)=P_{t_2}P_{t_1} f(x), $$
which, with \eqref{e:LimPNt2t1Pt2t1} and the fact $P^N_{t_2+t_1}=P^N_{t_2}P^N_{t_1}$, implies  \eqref{e:SemPro} for $x \in B_{R, \rho}$.
\ \\

Since $P_t({\bf 1} )=1$ and $P_t(f) \geq 0$ for any $f \geq 0$, $P_t$ is a Markov semigroup (\cite{GuZe03}).
\end{proof}
\section{Proof of Ergodicity Result} \label{s:ErgThe}
The main ingredient of the proof follows
the spirit of Bakry-Emery criterion for logarithmic Sobolev inequality (\cite{BaEm85}, \cite{GuZe03}). In
\cite{BaEm85}, the authors first studied the logarithmic Sobolev inequalities of some diffusion generator by differentiating its first order square field $\Gamma_1(\cdot)$ (see the definition of $\Gamma_1$ and $\Gamma_2$ in chapter 4 of \cite{GuZe03}) and obtained the following relations
\begin{equation} \label{e:BECri}
\frac d{dt} P_{t-s} \Gamma_1(P_sf) \leq -c P_{t-s}\Gamma_2 (P_s f)
\end{equation}
where $P_t$ is the semigroup generated by the diffusion generator, and $\Gamma_2(\cdot)$ is the second order square field. If $\Gamma_2(\cdot) \geq C\Gamma_1(\cdot)$, then one can obtain logarithmic Sobolev inequality. The relation $\Gamma_2(\cdot) \geq C\Gamma_1(\cdot)$ is called \emph{Bakry-Emery criterion}.
\ \\

In our case, one can also compute $\Gamma_1(\cdot), \Gamma_2(\cdot)$ of $P^N_t$, which have the similar relation as \eqref{e:BECri}. It is interesting to apply this relation to prove some regularity
of the semigroup $P^N_t$, but seems hard to obtain the gradient bounds by it. Alternatively, we replace
$\Gamma_1(f)$ by $|\nabla f|^2$, which
is actually not the first order square field of our case but the one of the diffusion generators, and differentiate $P_{t-s} |\nabla P_sf|^2$. We shall see that the following relation \eqref{e:BECri2} plays the same role as the Bakry-Emery criterion.
\begin{lem} \label{l:GraDec}
If $c \geq \eta+\delta$ with any $\delta>0$ and $c, \eta$ defined in (3) of Assumption \ref{a:IJ}, we have
\begin{equation} \label{e:GB}
|\nabla P^N_t f|^{2} \leq e^{-2\delta t} P^N_t|\nabla f|^{2} \ \ \
\forall \ f \in \D^2
\end{equation}
\end{lem}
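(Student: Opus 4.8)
The plan is to mimic the differentiation argument already used in Lemma \ref{l:FinSpePro}, but now applied to the full gradient square $|\nabla P^N_s f|^2 = \sum_{k \in \Gamma_N} |\p_k P^N_s f|^2$ rather than to a single component $|\p_k P^N_s f|^2$. The key object is the time derivative
\begin{equation*}
\frac{d}{ds} P^N_{t-s}\bigl(|\nabla P^N_s f|^2\bigr)
= \frac{d}{ds} P^N_{t-s}\Bigl(\sum_{k \in \Gamma_N} |\p_k P^N_s f|^2\Bigr).
\end{equation*}
Just as in \eqref{e:ForBacSem}, for each fixed $k$ the term
$\mcl L_N (\p_k P^N_s f)^2 - 2(\p_k P^N_s f)\,\mcl L_N(\p_k P^N_s f)$
is nonnegative (this is the diffusion-type carr\'e-du-champ positivity, valid because $\p^{\alpha}_i$ is a Markov generator), so after summing over $k$ the only surviving contribution comes from the commutator terms $2(\p_k P^N_s f)\,[\p_k,\mcl L_N]P^N_s f$. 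Using \eqref{e:ComLNPi}, this commutator splits into a drift-derivative part $(\p_k J_k)|\p_k P^N_s f|^2$, which is $\leq 0$ by $\p_k J_k \leq 0$, and an interaction part $\sum_{i\in\Gamma_N}(\p_k I_i)\,\p_i P^N_s f$.

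\textbf{The central estimate.} After discarding the good (negative) terms, I expect to arrive at
\begin{equation*}
\frac{d}{ds} P^N_{t-s}\bigl(|\nabla P^N_s f|^2\bigr)
\leq 2 P^N_{t-s}\Bigl(\sum_{k\in\Gamma_N}(\p_k J_k)|\p_k P^N_s f|^2
+ \sum_{k,i\in\Gamma_N}(\p_k I_i)(\p_k P^N_s f)(\p_i P^N_s f)\Bigr).
\end{equation*}
The diffusion term contributes $\leq -2c\,|\nabla P^N_s f|^2$ by the definition of $c=\inf_i(-\tfrac{d}{dy}J_i)$ in (3) of Assumption \ref{a:IJ}. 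For the interaction term I would bound $|\p_k I_i|\leq a_{ki}$ (Assumption \ref{a:IJ}(1)) and apply the arithmetic--geometric mean inequality in the form $|u_k||u_i|\leq \tfrac12(|u_k|^2+|u_i|^2)$ to the cross terms. Using the row/column sum control $\eta = (\sup_j\sum_i a_{ij})\vee(\sup_i\sum_j a_{ij})$, each of the two resulting single sums is dominated by $\eta\sum_k|\p_k P^N_s f|^2$, so the interaction contribution is at most $2\eta\,|\nabla P^N_s f|^2$. Combining,
\begin{equation*}
\frac{d}{ds} P^N_{t-s}\bigl(|\nabla P^N_s f|^2\bigr)
\leq -2(c-\eta)\,P^N_{t-s}\bigl(|\nabla P^N_s f|^2\bigr)
\leq -2\delta\,P^N_{t-s}\bigl(|\nabla P^N_s f|^2\bigr),
\end{equation*}
where the last step uses the hypothesis $c\geq\eta+\delta$.

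\textbf{Conclusion by Gronwall.} Writing $\phi(s):=P^N_{t-s}\bigl(|\nabla P^N_s f|^2\bigr)$, the differential inequality $\phi'(s)\leq -2\delta\,\phi(s)$ gives $\phi(t)\leq e^{-2\delta t}\phi(0)$. Since $\phi(t)=|\nabla P^N_t f|^2$ and $\phi(0)=P^N_t|\nabla f|^2$, this is exactly \eqref{e:GB}.

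\textbf{The main obstacle} I anticipate is handling the interaction cross-term cleanly: the commutator produces a genuinely off-diagonal bilinear form $\sum_{k,i}(\p_k I_i)(\p_k P^N_s f)(\p_i P^N_s f)$, and the delicate point is that the \emph{same} constant $\eta$ controls both the sum over $k$ (via $\sup_i\sum_k a_{ki}$) and the sum over $i$ (via $\sup_k\sum_i a_{ki}$), which is precisely why $\eta$ was defined as the maximum of the two directional sums. One must be careful that the arithmetic--geometric splitting respects this symmetry so that the bound $2\eta$ (and not something larger) emerges; any loss here would weaken the coercivity threshold and spoil the sharp exponent $\delta$ in the final ergodicity rate \eqref{e:ExpMix}. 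A secondary technical point is justifying the differentiation-under-the-semigroup and the carr\'e-du-champ positivity for the nonlocal generator $\p^\alpha_i$, but this is already implicitly used in \eqref{e:ForBacSem} and so may be cited from there.
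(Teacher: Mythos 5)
Your proposal is correct and follows essentially the same route as the paper: differentiate $P^N_{t-s}(|\nabla P^N_s f|^2)$, discard the nonnegative carr\'e-du-champ term, and reduce to the coercivity of the quadratic form $\sum_{i,j}[\p_i J_i\,\delta_{ij}+\p_j I_i]\xi_i\xi_j$, which the paper states as $-Q(\xi,\xi)\geq\delta|\xi|^2$ and which your AM--GM argument with the two directional sums defining $\eta$ verifies in detail. The only difference is that you spell out the step the paper labels ``easy to see.''
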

\begin{proof}
For the notational simplicity, we drop the index $N$ of the quantities.
By a similar calculus as in \eqref{e:ForBacSem}, we have
\begin{equation} \label{e:DifPs2m}
\begin{split}
\frac{d}{ds}P_{t-s} |\nabla P_{s}f|^{2}&=-P_{t-s}\left(\mathcal{L}_N|\nabla
P_{s} f|^{2}-2 \nabla P_{s} f
\cdot \mathcal{L}_N \nabla P_{s}
f \right) \\
&\ \ \ +2P_{t-s}\left(\nabla P_{s} f \cdot
[\nabla,\mcl L_N] P_{s} f \right) \\
& \leq 2P_{t-s} \left(\nabla P_{s} f \cdot
[\nabla, \mcl L_N] P_{s} f \right)  \\
&=2P_{t-s}\left(\sum_{i,j \in \Gamma_N} \p_j I_i(x) \p_i P_s f \p_j P_s f \right) \\
& \ \ +2P_{t-s}\left(\sum_{i \in \Gamma_N} \p_i J_i(x_i) (\p_i P_s f)^2 \right),
\end{split}
\end{equation}
where '$\cdot$' is the inner product of vectors in $\R^{\Gamma_N}$.
Denote the quadratic form by
$$Q(\xi,\xi)=\sum_{i,j \in \Gamma_N} \left[\p_i J_i(x_i) \delta_{ij}+\p_j I_i(x)\right] \xi_i\xi_j \ \  \ \forall \ \xi \in \R^{\Gamma_N},$$
it is easy to see by the assumption that
\begin{equation} \label{e:BECri2}
-Q(\xi,\xi) \geq \delta |\xi|^2.
\end{equation}
This, combining with \eqref{e:DifPs2m}, immediately implies
\begin{equation}
\frac{d}{ds}P_{t-s} |\nabla P_{s}f|^{2} \leq -2 \delta P_{t-s}\left(|\nabla P_{s}f|^{2}\right),
\end{equation}
from which we conclude the proof.
\end{proof}
Let us now combining Lemma \ref{l:GraDec} and the finite speed of propagation of information property \eqref{e:expfinite} to prove the ergodic result.

\begin{proof} [{\bf Proof of Theorem \ref{t:Erg}}]
 We split the proof into the following three steps: \\

\noindent \underline{\emph{Step 1:}} For all $f \in \mcl D^2$, $\lim \limits_{t \rightarrow \infty} P_tf(0)=\ell (f)$ where $\ell(f)$ is some constant depending on $f$. \\

For any $\forall t_2>t_1>0$, we have by triangle inequality
\begin{equation*} \label{e:01}
\begin{split}
|P_{t_2}f(0)-P_{t_1}f(0)| &\leq |P_{t_2}f(0)-P^N_{t_2}f(0)|
+|P^N_{t_2}f(0)-P^N_{t_1}f(0)| \\
& \ \ +|P_{t_1}f(0)-P^N_{t_1}f(0)|.
\end{split}
\end{equation*}
By Theorem \ref{t:ConDyn}, there
exists some $N(t_1,t_2) \in {\mathbb N}$ such that as $N>N(t_1,t_2)$
\begin{equation} \label{e:PtPNt}
|P_{t_2}f(0)-P^N_{t_2}f(0)|+|P_{t_1}f(0)-P^N_{t_1}f(0)|<e^{-\frac{\delta \wedge A}{2}t_1} |||f|||.
\end{equation}
\ \\
\indent Next, we show that for all $N \in \N$,
\begin{equation} \label{e:PNt20-PNt10}
|P^N_{t_2}f(0)-P^N_{t_1}f(0)| \leq C(A,\delta, \Lambda(f)) e^{-\frac{\delta \wedge A}{2} t_1} |||f|||.
\end{equation}
\ \\

 By the semigroup property of $P^N_t$ and fundamental theorem of calculus, one has
\begin{equation} \label{e:ErgodicAt0}
\begin{split}
|P^N_{t_2}f(0)-P^N_{t_1}f(0)|&=
\left|\E_{0} \left[P^N_{t_1}f(X^N(t_2-t_1))-P^N_{t_1}f(0)\right] \right| \\
&=\left|\int_0^1 \E_{0} \left[\frac d{d\lambda} P^N_{t_1}f(\lambda X^N(t_2-t_1)) \right] d \lambda \right|\\
& \leq \int_0^1  \sum_{i \in \Gamma_N} \E_{0} \left[|\p_i P^N_{t_1}f(\lambda X^N(t_2-t_1))|
|X^N_i(t_2-t_1)| \right] d \lambda.
\end{split}
\end{equation}
To estimate the sum '$\sum_{i \in \Gamma_N}$' in the last line, we split it
into two pieces '$\sum_{i \in \Lambda}$' and '$\sum_{i \in \Gamma_N \setminus \Lambda}$', and
control them by Lemma \ref{l:GraDec} and the finite speed of propagation of information property
in Lemma \ref{l:FinSpePro}. Let us show the more details as follows. \\

Take $0<A \leq 1/4$, and let $B=B(A,\eta) \geq 8$ be chosen as in
Lemma \ref{l:FinSpePro}. We choose a cube $\Lambda \supset
\Lambda(f)$ inside $\Gamma_N$ so that $dist(\Lambda^c,
\Lambda(f))=B^2 t^2_1$ (up to some order $O(1)$ correction). On the
one hand,  by \eqref{e:GB}, we clearly have $||\p_i P_tf|| \leq
e^{-\delta t} |||f|||$ for all $i \in \Gamma_N$. Therefore, by (2)
of Proposition \ref{p:AppExiDif},
\begin{equation} \label{e:Erg01}
\begin{split}
& \ \  \sum_{i \in \Lambda} \E_{0} \left[|\p_i P^N_{t_1}f(\lambda X^N(t_2-t_1))|
|X^N_i(t_2-t_1)| \right]\\
 & \leq  \sum_{i \in \Lambda} ||\p_i P^N_{t_1}f|| \E_0 \left[|X^N_i(t_2-t_1)|\right]
 \\
& \leq C \sum_{i \in \Lambda} e^{-\delta t_1}|||f||| (1+|i|^{\rho})
\end{split}
\end{equation}
\ \ \\
\indent As for the piece '$\sum_{i \in \Gamma_N \setminus \Lambda}$', it is clear to see $n_i=\sqrt{dist(i,\Lambda(f))} \geq Bt_1$ for $i \in \Gamma_N \setminus \Lambda$, by Lemma \ref{l:FinSpePro} and (2) of Proposition \ref{p:AppExiDif}, one has
\begin{equation} \label{e:Erg02}
\begin{split}
& \ \  \sum_{i \in \Gamma_N \setminus \Lambda} \E_{0} \left[|\p_i P^N_{t_1}f(\lambda X^N(t_2-t_1))|
|X^N_i(t_2-t_1)| \right]\\
& \leq \sum_{i \in \Gamma_N \setminus \Lambda} ||\p_i P^N_{t_1}f||
\E_{0} \left[|X^N_i(t_2-t_1)| \right] \\
 & \leq   C \sum_{i \in \Gamma_N \setminus \Lambda} e^{-A n_i-At_1} (1+|i|^\rho)
 |||f|||
\end{split}
\end{equation}
Since $0 \in B_{R,\rho}$ with any $R,\rho>0$, we take $\rho=1$ and
$R=1$ in the previous inequalities. Combining \eqref{e:ErgodicAt0},
\eqref{e:Erg01} and \eqref{e:Erg02}, we immediately have
\begin{equation} \label{e:PNt2f0-PNt1f0}
\begin{split}
 & \ \ |P^N_{t_2}f(0)-P^N_{t_1}f(0)| \\
 & \leq C\left[\sum_{i \in \Gamma_N
\setminus \Lambda} e^{-A n_i-\frac{A}{2}t_1}
(1+|i|)+(B^2t^2_1+1+\Lambda(f))^{1+d} e^{-\frac{\delta}{2}
t_1}\right] e^{-\frac{A \wedge \delta}{2}t_1}  |||f|||.
\end{split}
\end{equation}
 and $\sum_{i \in \Gamma_N \setminus \Lambda} e^{-A n_i}
(1+|i|) \leq \sum_{i \in \Z^d \setminus \Lambda} e^{-A n_i}
(1+|i|)<\infty$, whence \eqref{e:PNt20-PNt10} follows. Combining
\eqref{e:PNt2f0-PNt1f0} and \eqref{e:PtPNt}, one has
\begin{equation} \label{e:Pt2f0-Pt1f0}
|P_{t_2}f(0)-P_{t_1}f(0)| \leq C(A,\delta, \Lambda(f)) e^{-\frac{\delta \wedge A}{2} t_1} |||f|||.
\end{equation}
\ \\

\noindent \underline{\emph{Step 2:}}
Proving that $\lim_{t \rightarrow \infty} P_t f(x)=\ell(f)$ for all $x \in \mathbb B$. \\

It suffices to prove that the above limit
is true for every $x$ in one ball $B_{R, \rho}$. By triangle inequality, one has
\begin{equation} \label{e:05}
\begin{split}
|P_tf(x)-\ell(f)| &\leq |P_tf(x)-P^N_tf(x)|+|P^N_tf(x)-P^N_tf(0)|
\\
&\ \ \ +|P^N_tf(0)-P_tf(0)|+|P_tf(0)-\ell(f)|
\end{split}
\end{equation}
By \eqref{e:Pt2f0-Pt1f0},
\begin{equation} \label{e:limit at 0}
|P_tf(0)-\ell(f)|<Ce^{-\frac{A \wedge \delta}{2} t} |||f|||,
\end{equation}
where $C=C(A,\delta,\Lambda(f))>0$. By Theorem \ref{t:ConDyn},
$\forall \ t>0, \ \exists \ N(t,R,\rho) \in {\mathbb N}$ such that
as $N>N(t,R,\rho)$
\begin{equation} \label{e:N approximation at point}
\begin{split}
& |P_tf(x)-P^N_tf(x)|<e^{-\frac{A \wedge \delta}{2}t} |||f|||, \\
& |P^N_tf(0)-P_tf(0)|<e^{-\frac{A \wedge \delta}{2}t} |||f|||.
\end{split}
\end{equation}
\ \

 By an argument similar as in \eqref{e:ErgodicAt0}-\eqref{e:Erg02}, we have
\begin{equation} \label{e:PNtfx-PNtf0}
\begin{split}
& \ \ |P^N_tf(x)-P^N_tf(0)| \leq \sum_{i \in \Z^d} ||\p_i P^N_t f|| |x_i| \\
& \leq  C \left[(B^2t^2_1+1+\Lambda(f))^{\rho+d} e^{-\delta t}
+\sum_{i \in \Gamma_N \setminus \Lambda} e^{-A n_i-At} (1+|i|^\rho) \right] |||f||| \\
& \leq C \left[(B^2t^2+1+\Lambda(f))^{\rho+d} e^{-\frac{\delta}{2}
t} +\sum_{i \in \Gamma_N \setminus \Lambda}
e^{-A n_i-\frac{A}2 t} (1+|i|^\rho) \right] e^{-\frac{A \wedge \delta}{2} t}|||f|||. \\
\end{split}
\end{equation}
Collecting \eqref{e:05}-\eqref{e:PNtfx-PNtf0}, we immediately
conclude Step 2. \\

\noindent \underline{\emph{Step 3:}} Proof of the existence of ergodic
measure $\mu$ and \eqref{e:ExpMix}. \\

 From step 2, for each $f
\in \mcl D^2$, there exists a constant $\ell(f)$ such that
$$\lim_{t \rightarrow \infty} P_tf(x)=\ell(f)$$
for all $x \in \mathbb B$. It is easy to see that $\ell$ is a linear functional on $\mcl D^2$, since
$\mathbb B$ is locally compact (under the product topology), there exists some unsigned Radon measure $\mu$ supported on $\mathbb B$ such that $\mu(f)=\ell (f)$ for all $f \in \mcl D^2$. By the fact that $P_t {\bf 1}(x)=1$ for all $x \in \mathbb B$ and $t>0$, $\mu$ is a probability measure. \\

On the other hand, since $P_t f(x)=P_t^{*} \delta_x(f)$ and $\lim_{t
\rightarrow \infty} P_tf=\mu(f)$, we have $P_t^{*} \delta_x
\rightarrow \mu$ weakly and $\mu$ is strongly mixing. Moreover, by
\eqref{e:05}-\eqref{e:PNtfx-PNtf0}, we immediately have
$$|P_tf(x)-\mu(f)| \leq C(A,\delta,x,\Lambda(f)) e^{-\frac{A \wedge \delta}{2} t}|||f|||,$$
recall that $0<A \leq 1/4$ in 2 of Lemma \ref{l:FinSpePro} and take $A=1/4$ in the above inequality,
we immediately conclude the proof of \eqref{e:ExpMix}.
\end{proof}
\section{Appendix}
In this section, we shall prove (1) of Proposition \ref{p:AppExiDif}, i.e. the existence and uniqueness of strong solutions of \eqref{e:GalApp}. To this end, we first need to introduce Skorohod's topology and a tightness criterion as follows.
\begin{defn} [Skorohod's topology (\cite{An07}, page 29)]
Given any $T>0$, let $D([0,T]; \R^{\Gamma_N})$ be the collection of the functions from
$[0,T]$ to $\R^{\Gamma_N}$ which are \emph{right continuous} and have \emph{left limit}. The Skorohod topology is
given by the following metric $d$
$$d(f,g)=\inf_{\lambda \in \Lambda} \{||f \circ \lambda-g||_{\infty} \vee ||\lambda-e||_\infty\}$$
where $\Lambda$ is the set of the strictly increasing functions mapping $[0,T]$ onto itself such that
both $\lambda$ and its inverse are continuous, and $e$ is the identity map on $[0,T]$.
\end{defn}
In order to prove the tightness of probability measures on $D([0,T];\R^{\Gamma_N})$, we define
$$v_f(t,\delta)=\sup\{|f(t_1)-f(t_2)|; t_1,t_2 \in [0,T] \cap (t-\delta,t+\delta)\},$$
$$w_f(\delta)=\sup\{\min(|f(t)-f(t_1)|,|f(t_2)-f(t)|); t_1\leq t \leq t_2 \leq T,t_2-t_1 \leq \delta\}.$$
\ \\
\indent The following theorem can be found in \cite{An07} (page 29) or \cite{Be98}. Roughly speaking,  the statement (1) below means that
most of the paths are uniformly bounded, while (2) rules out the paths which have large oscillation in
a short time interval.
\begin{thm} \label{t:TigCri}
The sequence of probability measures $\{P_n\}$
is tight in the above Skorohod's topology if
\begin{enumerate}
\item For each $\e>0$, there exists $c>0$ such that
$$P_n\{f: ||f||_{\infty}>c\} \leq \e, \ \ \ \forall \ n.$$
\item For each $\e>0$, there exists some $\delta$ with $0<\delta<T$ and some integer $n_0$ such that as $n \geq n_0$
$$P_n\{f; w_f(\delta) \geq \eta\} \leq \e,$$
and
$$P_n\{f; v_f(0,\delta) \geq \eta\} \leq \e, P_n\{f; v_f(T,\delta) \geq \eta\} \leq \e.$$
\end{enumerate}
\end{thm}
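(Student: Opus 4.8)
The natural plan is to derive tightness from Prokhorov's theorem. Since $D([0,T];\R^{\Gamma_N})$, endowed with a complete metric inducing Skorohod's topology, is a Polish space, a family of probability measures is tight if and only if it is relatively compact; thus it suffices to produce, for every $\e>0$, a set $K=K_\e$ with compact closure and $\inf_n P_n(K)\ge 1-\e$. The bridge from the hypotheses to such a $K$ is the Skorohod-space analogue of the Arzel\`a--Ascoli theorem, i.e. the characterization of relatively compact subsets of $D([0,T];\R^{\Gamma_N})$ (see \cite{Be98}): a set $A$ has compact closure if and only if $\sup_{f\in A}\|f\|_\infty<\infty$ together with
$$\lim_{\delta\to0}\sup_{f\in A}w_f(\delta)=0,\qquad \lim_{\delta\to0}\sup_{f\in A}v_f(0,\delta)=0,\qquad \lim_{\delta\to0}\sup_{f\in A}v_f(T,\delta)=0.$$
These are exactly the quantities appearing in hypothesis (2), the uniform bound being supplied by hypothesis (1); recording this characterization is the first and essential step.

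Given $\e>0$, I would then build $K_\e$ by intersecting a ball with countably many modulus constraints. By (1) choose $c$ with $\sup_n P_n\{\|f\|_\infty>c\}\le \e/2$. For each $k\in\N$, apply (2) with threshold $1/k$ and tolerance $\e\,2^{-k}/6$ to obtain $\delta_k>0$ and an index $n_k$ such that, for $n\ge n_k$, each of $P_n\{w_f(\delta_k)\ge 1/k\}$, $P_n\{v_f(0,\delta_k)\ge 1/k\}$, $P_n\{v_f(T,\delta_k)\ge 1/k\}$ is at most $\e\,2^{-k}/6$. Setting
$$K_\e=\Big\{f:\ \|f\|_\infty\le c,\ \ w_f(\delta_k)\le 1/k,\ v_f(0,\delta_k)\le 1/k,\ v_f(T,\delta_k)\le 1/k\ \ \forall k\in\N\Big\},$$
the characterization recalled above (using that each modulus is nondecreasing in the window) guarantees that $\overline{K_\e}$ is compact, and a union bound over the defining events gives $P_n(K_\e^{c})\le \e/2+\sum_{k\ge1}3\cdot \e\,2^{-k}/6=\e$ for all $n$ beyond the thresholds.

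The only remaining point is to make this bound uniform in $n$. For every path in $D([0,T];\R^{\Gamma_N})$ the moduli $w_f(\delta)$ and $v_f(0,\delta)$ decrease to $0$ as $\delta\to 0$ (the inner minimum in $w_f$ ensuring that even a jump at $T$ does not obstruct this), so each individual $P_n$ satisfies $P_n\{w_f(\delta)\ge\eta\}\to0$ and $P_n\{v_f(0,\delta)\ge\eta\}\to0$ as $\delta\to0$; combined with the individual tightness of each $P_n$ on the Polish space $D([0,T];\R^{\Gamma_N})$ (Ulam's theorem), the finitely many indices below the thresholds $n_k$ are absorbed by enlarging $K_\e$ by finitely many compact sets, which stays compact. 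This yields $\inf_n P_n(K_\e)\ge 1-\e$ and hence tightness. I expect the genuine obstacle to be none of this bookkeeping but the compactness characterization itself: the proof that uniform boundedness together with uniform decay of $w_f$, $v_f(0,\cdot)$ and $v_f(T,\cdot)$ forces relative compactness is the substantive analytic content, and is precisely why the statement is quoted from \cite{An07} and \cite{Be98}.
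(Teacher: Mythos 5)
The paper does not prove this theorem at all: it is stated as a quoted result, with the sentence ``The following theorem can be found in \cite{An07} (page 29) or \cite{Be98}'' and only a heuristic gloss on what conditions (1) and (2) mean. Your proposal is therefore not comparable to a proof in the paper, but it is the standard textbook argument (as in Billingsley's \emph{Convergence of Probability Measures}): exhibit compact sets of uniformly high measure by combining the sup-norm bound with countably many modulus constraints, invoking the Arzel\`a--Ascoli-type characterization of relatively compact subsets of $D([0,T];\R^{\Gamma_N})$ in terms of $\|f\|_\infty$, the two-sided modulus $w_f$, and the endpoint oscillations $v_f(0,\cdot)$, $v_f(T,\cdot)$. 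That characterization is indeed the substantive analytic content, and deferring it to the references is consistent with how the paper itself treats the whole theorem. One bookkeeping point in your last paragraph deserves care: the thresholds $n_k$ may tend to infinity with $k$, so the set of indices $n$ that fall below \emph{some} $n_k$ need not be finite, and ``enlarging $K_\e$ by finitely many compact sets'' is not immediately available. The standard repair is to first use individual tightness (Ulam) of each $P_n$, together with the necessity direction of the compactness characterization, to shrink each $\delta_k$ to $\min(\delta_k,\delta_1^{(k)},\dots,\delta_{n_k-1}^{(k)})$ so that the $k$-th modulus bound holds for \emph{all} $n$; then your union bound is uniform in $n$ with a single $K_\e$. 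Note also that $v_f(T,\delta)$ does not tend to $0$ for an individual c\`adl\`ag path with a jump at $T$ (it tends to $|f(T)-f(T-)|$), so the endpoint condition at $T$ must be handled via Ulam's theorem rather than via pathwise decay of the modulus; you correctly refrain from claiming that decay, but the repair above is where it matters.
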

\ \\
\begin{proof} [{\bf Proof of (1) of Proposition \ref{p:AppExiDif}}]
For the notational convenience, if no confusion can arise, we shall drop the index $N$ of the quantities and simply write all the equations and estimates in the vector form. To understand the idea, one can take all  vectors as scalars. The $|\cdot|$ means the absolute value of vectors, i.e. for any $x \in \R^{\Gamma_N}$, $|x|=\sum_{i \in \Gamma_N} |x_i|$.
\\

From the above, the equation \eqref{e:AppEqu} can be written in vector form by
\begin{equation} \label{e:SimSDE}
\begin{cases}
dX(t)=J(X(t))dt+I(X(t))dt+dZ(t), \\
X(0)=x.
\end{cases}
\end{equation}
Recall the assumption $J_i(0)=0$ for all $i \in \Gamma_N$ in \eqref{e:J=0}, we can rewrite the above equation by
\begin{equation} \label{e:NewForX}
dX(t)=\frac{J(X(t))}{X(t)} X(t) dt+I(X(t))dt+dZ(t).
\end{equation}
where $\frac{J(X(t))}{X(t)}=diag\{\frac{J_i(X_i(t))}{X_i(t)}; i \in \Gamma_N\}$ is a diagonal matrix.
By $\frac{J_i(X_i(t))}{X_i(t)} \leq 0$ for all $i \in \Gamma_N$, the term $\frac{J(X(t))}{X(t)}X(t)dt$ in the above equation will drive $X(t)$ to zero. By the Lipschitz property of $I$, the equation \eqref{e:NewForX}
without $\frac{J(X(t))}{X(t)}X(t)dt$ has a unique solution. Combining these two points together, we
expect that \eqref{e:NewForX} has a unique solution. Let us make the above heuristic observation rigorous as follows.
\\

Define
$X^{(0)}(t)=x$  and, for $n \geq 0$, $X^{(n+1)}$ satisfies the following equation
\begin{equation} \label{e:NSte}
 dX^{(n+1)}(t)=\frac{J(X^{(n)}(t))}{X^{(n)}(t)} X^{(n+1)}(t)dt+I(X^{(n+1)}(t))dt+dZ(t).
\end{equation}
Set
$$\estn=\exp \left\{\int_0^t \frac{J(X^{(n)}(s))}{X^{(n)}(s)} ds \right\}.$$
Thanks to $\frac{J_i(X_i(t))}{X_i(t)} \leq 0$ ($i \in \Gamma_N$), by the classical Picard
iteration, (noticing that the stochastic term in \eqref{e:MilSol}
plays no role in the convergence of the iteration), we  have
\begin{equation} \label{e:MilSol}
\begin{split}
X^{(n+1)}(t)=\estn x+\int_0^t
\estn I(X^{(n+1)}(s)) ds+\int_0^t \estn d Z(s).
\end{split}
\end{equation}
\ \\
\emph{\underline{Step 1}: Existence and Uniqueness under the tightness assumption.} We shall prove that the laws $\{P^{(n)}\}$ of $\left\{(X^{(n)}(t))_{0 \leq t \leq T}\right\}$, which are inductively defined by \eqref{e:MilSol}, are tight under the Skorohod topology on
$D([0,T];\R^{\Gamma_N})$ in step 2. With this tightness, one has some probability measure $P$ on $D([0,T];\R^{\Gamma_N})$ and some subsequence of $\{n\}$, still denoting it by $\{n\}$ for notational simplicity, such that
$$P^{(n)} \rightarrow P \ \ {\rm weakly} \ \ {\rm as \ } n \rightarrow \infty.$$
By Skorohod embedding Theorem (see \cite{IkWa81} for the Brownian motion case and
\cite{Pi07}, \cite{Ro76} for more general processes), we have some probability space $(\Omega, \mathcal F, \mcl F_t, \mathbb P_x)$, together with some random variable sequence $\{X^{(n)}\}$ and $X$, (\emph{note} that the $X^{(n)}$ here are not necessary the same as in \eqref{e:MilSol}), satisfying
\begin{itemize}
\item Under $\mathbb P_x$, $X^{(n)}$ have probability $P^{(n)}$ and $X$ has probability $P$.
\item $X^{(n)} \rightarrow X$ as $n \rightarrow \infty$ under Skorohod's topology.
\end{itemize}
From the first property above, one can see that $X^{(n+1)}$ satisfies \eqref{e:NSte} and \eqref{e:MilSol}. More precisely,
\begin{equation} \label{e:MilSol1}
\begin{split}
X^{(n+1)}(t)&=\estn x+\int_0^t
\estn I(X^{(n+1)}(s)) ds+\int_0^t \estn d Z^{(n+1)}(s) \\
&=\estn x+\int_0^t \estn I(X^{(n+1)}(s))ds \\
& \ \ +Z^{(n+1)}(t)+\int_0^t Z^{(n+1)}(s) \estn \frac{J^{(n)}(X^{(n)}(s))}{X^{(n)}(s)}ds.
\end{split}
\end{equation}
where $Z^{(n+1)}$ is a symmetric $\alpha$-stable process depends on $X^{(n+1)}$. Since,
by Doob's martingale inequality and the $\alpha$-stable property, one has
$$\E_x \sup_{0 \leq s \leq t} |Z^{(n+1)}(s)|<\infty, \ \ \ \E_x |Z^{(n+1)}(s_1)-Z^{(n+1)}(s_2)| \leq |s_1-s_2|^{1/\alpha},$$
by the tightness criterion Theorem \ref{t:TigCri} and Skorohod embedding theorem again, we have some subsequence $\{n_k\}$ of
$\{n\}$ so that $Z^{(n_k)} \rightarrow Z$, where the $Z$ is some $|\Gamma_N|$-dimensional standard symmetric $\alpha$-stable processes. \\

Sending $n_k \rightarrow \infty$, by continuity of $J$ and $I$, $X$ satisfies the equation \eqref{e:MilSol1} with $X^{(n)}$ and $X^{(n+1)}$ therein both replaced by $X$. Hence,  $X$ solves \eqref{e:SimSDE} in the mild solution sense. Since
\eqref{e:SimSDE} is a finite dimensional dynamics, by differentiating $t$ on the both side of
this mild solution, we have that $X(t)$ satisfies \eqref{e:SimSDE}, which is equivalent to
\begin{equation} \label{e:StrSol}
X(t)=x+\int_0^t \left[J(X(s))+I(X(s))\right] ds+Z(t).
\end{equation}
So the equation \eqref{e:SimSDE} at least has a weak solution, i.e. there exists a random variable $X(t)$ and a standard $|\Gamma_N|$-dimensional symmetric $\alpha$-stable process $Z(t)$ on $(\Omega, \mcl F, \mcl F_t, \mathbb P_x)$ satisfying \eqref{e:StrSol}.
\\

Suppose that there exists another weak solution $Y$ on $(\Omega, {\mathcal F}, {\mcl F}_t, \tilde{\mathbb P}_x)$. One can see that $Y(t)-X(t)$ satisfies the following equation
\begin{equation}
\frac d{dt} (X(t)-Y(t))=J(X(t))-J(Y(t))+I(X(t))-I(Y(t))
\end{equation}
with $Y(0)-X(0)=0$. By Assumption \ref{a:IJ},
one has $(J(x)-J(y)) \cdot (x-y) \leq 0$, and thus  from the above differential equation one obtains
$$|X(t)-Y(t)|^2 \leq C(N) \int_0^t |X(s)-Y(s)|^2 ds$$
which immediately implies $X(t)-Y(t)=0$ for all $t>0$.
This pathwise uniqueness implies that $X(t)$ is the unique mild solution of \eqref{e:SimSDE} (Chapter V.3 of \cite{RoWi00}, \cite{BaCh06}).
\\

\noindent \emph{\underline{Step 2}: Tightness of $P^{(n)}$.}  Recall that $P^{(n)}$ be the
probability of $(X^{(n)}(t))_{0 \leq t \leq T}$. In order to prove that $P^{(n)}$ is tight in
$D([0,T]; \R^{\Gamma_N})$, by Theorem \ref{t:TigCri}, it suffices to prove the following two inequalities:
for any $n \in \N$,
\begin{equation} \label{e:UniBouT}
\E \sup_{0 \leq t \leq T} |X^{(n)}(t)| \leq e^{CT}(|x|+C(N) T^{1/\alpha})
\end{equation}
\begin{equation} \label{e:EquCon}
\E[|X^{(n)}(t_1)-X^{(n)}(t_2)|] \leq C(|x|,T, N) |t_1-t_2|^{\delta}  \ \ \ \ \forall \  0 \leq t_1, t_2 \leq T.
\end{equation}
with $\delta=\delta(I,J)>0$.  \\

By \eqref{e:MilSol}, triangle inequality and the Lipschitz condition of $I$ (w.l.o.g. assume $I(0)=0$),
one has
\begin{equation*} 
\begin{split}
\E \sup_{0 \leq s \leq t} |X^{(n+1)}(s)| & \leq |x|+C \int_0^t \E \sup_{0 \leq r \leq s} |X^{(n+1)}(r)| ds+\E \left |\int_0^t
e^{\int_s^t \frac{J(X^{(n)}(r))}{X^{(n)}(r)} dr} dZ(s)\right|  \\
\end{split}
\end{equation*}
moreover, by the same argument as in \eqref{e:IntByParJ},
\begin{equation} \label{e:IntByPar}
\begin{split}
\E \left |\int_0^t \estn dZ(s)\right| \leq C(N) t^{1/\alpha}.
\end{split}
\end{equation}
Hence,
\begin{equation*} 
\begin{split}
\E \sup_{0 \leq s \leq t} |X^{(n+1)}(s)|
& \leq |x|+C \int_0^t \E \sup_{0 \leq r \leq s} |X^{(n+1)}(r)| ds+C(N)T^{1/\alpha},
\end{split}
\end{equation*}
which easily implies \eqref{e:UniBouT}.
\ \\

Now we prove \eqref{e:EquCon}. By triangle inequality, we have
\begin{equation} \label{e:Xt2-Xt1}
\begin{split}
|&X^{(n+1)}(t_2)-X^{(n+1)}(t_1)| \leq |(\mcl E^{(n)}(0,t_2)-\mcl E^{(n)}(0,t_1))x| \\
& \ +\left|\int_0^{t_2} \mcl E^{(n)} (s,t_2) I(X^{(n+1)}(s))ds-\int_0^{t_1} \mcl E^{(n)}(s,t_1) I(X^{(n+1)}(s))ds \right| \\
& \ +\left|\int_0^{t_2} \mcl E^{(n)} (s,t_2) dZ(s)-\int_0^{t_1} \mcl E^{(n)}(s,t_1) dZ(s)\right| \\
&=A_1(t)+A_2(t)+A_3(t)
\end{split}
\end{equation}
where $A_1(t),A_2(t),A_3(t)$ denote in order the three terms on the r.h.s. of the inequality, and they
can be estimated by the same argument. We shall show this argument by $A_3$ (which, among the three terms, is the
most difficult one) as follows.
\ \\

By integration by part formula, one has
\begin{equation}
\begin{split}
A_3 & \leq |Z(t_2)-Z(t_1)|+|\int_0^{t_2} Z(s) \mcl E^{(n)} (s,t_2) \frac{J(X^{(n)}(s))}{X^{(n)}(s)} ds\\
& \ \ \ \ -\int_0^{t_1} Z(s) \mcl E^{(n)}(s,t_1) \frac{J(X^{(n)}(s))}{X^{(n)}(s)} ds|
\end{split}
\end{equation}
By the $\alpha$-stable property of $Z(t)$, one has  $\E[|Z(t_2)-Z(t_1)|] \leq C|t_2-t_1|^{1/\alpha}$.
For the second term on the r.h.s. of the inequality, we have
\begin{equation}
\begin{split}
& \ \left|\int_0^{t_2} Z(s) \mcl E^{(n)} (s,t_2) \frac{J(X^{(n)}(s))}{X^{(n)}(s)} ds-\int_0^{t_1} Z(s) \mcl E^{(n)}(s,t_1) \frac{J(X^{(n)}(s))}{X^{(n)}(s)} ds \right| \\
& \leq \left|\int_{t_1}^{t_2} Z(s) \mcl E^{(n)} (s,t_2) \frac{J(X^{(n)}(s))}{X^{(n)}(s)} ds \right| \\
&\ \ +\left|\int_{0}^{t_1} Z(s) \mcl E^{(n)} (s,t_1)(\mcl E^{(n)}(t_1,t_2)-1) \frac{J(X^{(n)}(s))}{X^{(n)}(s)} ds \right| \\
&=H_1+H_2
\end{split}
\end{equation}
where $H_1$ and $H_2$ denote the two terms on the r.h.s. of the inequality. As for $H_1$,
by H$\ddot{o}$lder's inequality (with $1<\beta<\alpha$) and the relation $d \estn =\estn \left(-\frac{J(X^{(n)}(s))}{X^{(n)}(s)}\right)ds$, we have
\begin{equation}
\begin{split}
&  \E \left|\int_{t_1}^{t_2} Z(s) \mcl E^{(n)} (s,t_2) \frac{J(X^{(n)}(s))}{X^{(n)}(s)} ds \right| \\
& \leq \E \left[\sup_{t_1 \leq s \leq t_2} |Z(s)| \left|\int_{t_1}^{t_2} \mcl E^{(n)} (s,t_2) \left(-\frac{J(X^{(n)}(s))}{X^{(n)}(s)} \right) ds \right| \right] \\
& \leq C(\beta,T) \left\{\E \left|\int_{t_1}^{t_2} \mcl E^{(n)} (s,t_2) \left(-\frac{J(X^{(n)}(s))}{X^{(n)}(s)} \right) ds \right|^{\frac{\beta}{\beta-1}}\right\}^{\frac{\beta-1}{\beta}} \\
&=C(\beta,T) \left\{\E \left|\mcl E^{(n)} (t_1,t_2)-1 \right|^{\frac{\beta}{\beta-1}} \right\}^{\frac{\beta-1}{\beta}}.
\end{split}
\end{equation}
To estimate the expectation in the last line, we split the sample space $\Omega$
into two pieces
\begin{align*}
\Omega_1=\left\{\omega; \left|\int_{t_1}^{t_2} \frac{J(X(s))}{X(s)} ds\right| \leq (t_2-t_1)^{1/\alpha} \right \}\\
\Omega_2=\left \{\omega; \left|\int_{t_1}^{t_2} \frac{J(X(s))}{X(s)} ds \right| \geq (t_2-t_1)^{1/\alpha} \right\},
\end{align*}
and easily get
\begin{equation*}
\begin{split}
\E\left(\left|\mcl E^{(n)} (t_1,t_2)-1 \right| 1_{\Omega_1}\right)^{\frac{\beta}{\beta-1}}& \leq \E \left(\int_0^1 \left|e^{\lambda \int_{t_1}^{t_2} \frac{J(X(s))}{X(s)} ds}\right| \left|\int_{t_1}^{t_2} \frac{J(X(s))}{X(s)} ds \right| 1_{\Omega_1} d \lambda \right)^{\frac{\beta}{\beta-1}} \\
& \leq C(N)|t_2-t_1|^{\frac{\beta}{(\beta-1)\alpha}}.
\end{split}
\end{equation*}
As for the piece $\Omega_2$, by its definition and the pigeon hole principle, for each $\omega \in \Omega_2$, there exists some $r \in (t_1,t_2)$ so that $\left|\frac{J(X(r,\omega))}{X(r,\omega)}\right| \geq (t_2-t_1)^{\frac 1\alpha-1}$,
by the growth condition of $J$, we have $|X(r,\omega)| \geq |t_2-t_1|^{\frac{1-\alpha}{\kappa \alpha}}$, hence
$$\Omega_2 \subset \left \{\omega: \sup_{0 \leq t \leq T} |X(t,\omega)| \geq |t_2-t_1|^{\frac{1-\alpha}{\kappa \alpha}} \right \}.$$
By \eqref{e:UniBouT} and Chebyshev inequality, we have
$$\mathbb P (\Omega_2) \leq C(T,|x|,N)|t_2-t_1|^{\frac{\alpha-1}{\kappa \alpha}}$$
and thus
\begin{equation*}
\E \left(\left|1-\mcl E^{(n)}(t_1,t_2) \right| 1_{\Omega_2}\right)^{\frac{\beta}{\beta-1}} \leq C(T,|x|,N,\beta) |t_2-t_1|^{\frac{\alpha-1}{\kappa \alpha}}.
\end{equation*}
Combining the estimates on $\Omega_1$ and on $\Omega_2$, we immediately have
$$\E H_1 \leq C(T,|x|,N, \beta) |t_2-t_1|^{\frac{(\alpha-1)(\beta-1)}{\kappa \alpha \beta}}.$$

By some arguments as in $H_1$, $H_2$ can be estimated by
\begin{equation}
\begin{split}
& \ \E \left|\int_{0}^{t_1} Z(s) \mcl E^{(n)} (s,t_1)(1-\mcl E^{(n)}(t_1,t_2)) \frac{J(X^{(n)}(s))}{X^{(n)}(s)} ds \right| \\
& \leq \E \left[\sup_{0 \leq s \leq t_1} |Z(s)| \left|\int_{0}^{t_1} \mcl E^{(n)} (s,t_1) \left(-\frac{J(X^{(n)}(s))}{X^{(n)}(s)}\right) (1-\mcl E^{(n)}(t_1,t_2)) ds \right| \right] \\
& \leq C(\beta,T, N) \left\{\E \left[\left|\int_{0}^{t_1} \mcl E^{(n)} (s,t_1) \left(-\frac{J(X^{(n)}(s))}{X^{(n)}(s)} \right)ds \right| \left|\mcl E^{(n)}(t_1,t_2)-1\right| \right]^{\frac{\beta}{\beta-1}}\right\}^{\frac{\beta-1}{\beta}} \\
&\leq C(\beta,T,N) \left\{\E \left|\mcl E^{(n)} (t_1,t_2)-1 \right|^{\frac{\beta}{\beta-1}} \right\}^{\frac{\beta-1}{\beta}} \\
&\leq C(T,\beta,N,|x|) |t_2-t_1|^{\frac{(\alpha-1)(\beta-1)}{\kappa \alpha \beta}}
\end{split}
\end{equation}
Collecting the estimates of $\E H_1$ and $\E H_2$, we have
$$\E A_3 \leq C(T,\beta,N,|x|) |t_2-t_1|^{\frac{(\alpha-1)(\beta-1)}{\kappa \alpha \beta}}.$$
\ \\
\indent $\E A_1$ and $\E A_2$ have a similar estimates by the same arguments. Finally, by \eqref{e:Xt2-Xt1}, we have some positive constant $\delta>0$ so that
\begin{equation*}
\E |X^{(n+1)}(t_2)-X^{(n+1)}(t_1)| \leq C(T,\beta,N,|x|) |t_2-t_1|^\delta.
\end{equation*}
This concludes the proof of \eqref{e:EquCon}.
\end{proof}
\bibliographystyle{amsplain}

\end{document}